\newtheorem{theorem}{Theorem}[section]
\newtheorem{lemma}[theorem]{Lemma}
\newtheorem{proposition}[theorem]{Proposition}
\newtheorem{definition}[theorem]{Definition}
\newtheorem{remark}[theorem]{Remark}
\title[Existence and uniqueness]{Existence and uniqueness results for a nonlinear Budiansky-Sanders shell model}
\author[T.H.Giang]{Trung Hieu Giang$^{1,2}$}
\address{Trung Hieu Giang$^{1,2}$\newline
	$^1$ Department of Mathematics, City University of Hong Kong, 83 Tat Chee Avenue, Kowloon, Hong Kong; \newline
	$^2$ Institute of Mathematics, Vietnam Academy of Science and Technology, 18 Hoang Quoc Viet, Cau Giay, Ha Noi, Vietnam.}
\email{thgiang2-c@my.cityu.edu.hk}
\subjclass{Primary: 74K25, 74B20; Secondary: 35A15, 35A02}
\keywords{Nonlinear Elasticity, Existence Theory, Minimizing Solution, Uniqueness}
\begin{document}

	\begin{abstract}
		A nonlinear shell model is studied in this paper. This is a nonlinear variant of the Budiansky-Sanders linear shell model. Under some suitable assumptions on the magnitude of the applied force, we will prove the existence of a minimizer for this shell model. In addition, we will also show that our existence result can be applied to all kinds of geometries of the middle surface of the shell. We will also show that the minimizer found in this fashion is unique, provided the applied forces are small enough. Our result hence extends the one given by Destuynder in \cite{Destuyn1}.
	\end{abstract}
	
	\maketitle
	\section{Introduction}\label{sec1}
	The notation is defined in Section \ref{sec2}.
	
	During the last several decades, many studies have been devoted to developing two-dimensional shell theory. Based on different assumptions and approaches, the authors have derived various nonlinear shell models (see, for example, \cite{koi, na, ciarlet} and the references therein), such as the one by Koiter, the one by Naghdi, etc. One of these shell models is proposed by Destuynder (see \cite{Destuyn1, Destuyn2}). His shell model is a nonlinear variant of the Budiansky-Sanders linear shell model (see \cite{Budisan}), where the nonlinear contribution is only confined to the "membrane" energy term. More precisely, it states that the unknown displacement field $\boldsymbol{\eta}: \omega \to \mathbb{R}^3$ of the middle surface $S = \boldsymbol{\theta}(\overline{\omega})$ should minimize a functional
	\begin{equation}\label{s1-e1}\tag{P}
		J_{BS}({\boldsymbol{\eta}}) := \int\limits_{\omega} W_{BS}(\boldsymbol{\eta})\,\sqrt{a} d y  - \int\limits_{\omega} \boldsymbol{f} \cdot \boldsymbol{\eta} \sqrt{a}dy,
	\end{equation}
	called the total energy of the deformed shell over an appropriate set of admissible displacement fields. Here 
	$$
	W_{BS} ({\boldsymbol{\eta}}):= \dfrac{\varepsilon}{2}W_{BS}^M ({\boldsymbol{\eta}})+ \dfrac{\varepsilon^3}{6}W_{BS}^F({\boldsymbol{\eta}})
	$$
	denotes the stored energy function, $W_{BS}^M({\boldsymbol{\eta}})$ and $W_{BS}^F({\boldsymbol{\eta}})$ (which will be defined later) respectively denote the membrane and the flexural energy associated with an admissible displacement ${\boldsymbol{\eta}}$, and $\boldsymbol{f}$ denotes the density of applied force. This nonlinear shell model is supposed to be theoretically applied to the general shape of shells.
	
	The existence of minimizers for this nonlinear shell model has also been partially studied in \cite{Destuyn1}. In particular, provided some additional assumptions on the magnitude of the applied forces and the geometry of the shell, the authors obtained the following result (see \cite{Destuyn1}).
	
	\begin{theorem}\label{s1-t1}
		Provided the norms of the applied forces and those of Christoffel symbols are small enough, the nonlinear problem \eqref{s1-e1} has at least one minimizer.  
	\end{theorem}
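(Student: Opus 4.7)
The plan is to apply the direct method of the calculus of variations. First I would fix an appropriate Hilbert space $V(\omega)$ of admissible displacements encoding the essential boundary conditions of the shell---typically an $H^1_0$-type space for the two tangential components and an $H^2_0$-type space for the normal component---equipped with a norm tuned to the Budiansky--Sanders energy. Closedness of the admissible set under weak convergence is then automatic, so the two ingredients that require work are coercivity and weak lower semicontinuity of $J_{BS}$ on $V(\omega)$.

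The heart of the matter is coercivity. I would split $W_{BS}^M(\boldsymbol{\eta})$ into its principal quadratic part (the linearized change of metric) and the remaining cubic and quartic contributions in $\nabla\boldsymbol{\eta}$, and observe that $W_{BS}^F(\boldsymbol{\eta})$ is already a positive-definite quadratic form in the linearized change of curvature. Combining these with a Korn-type inequality on the middle surface gives a lower bound of the form $c\|\boldsymbol{\eta}\|_V^2$ coming from the leading quadratic forms. The indefinite cubic and quartic terms carry Christoffel symbols as coefficients (since covariant derivatives enter the change of metric), so making their norms small enough lets one absorb these contributions into the leading positive bound; the work of the applied force is then dominated via Cauchy--Schwarz and Poincar\'e together with the smallness of $\boldsymbol{f}$. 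The outcome I would aim at is an inequality
\[
J_{BS}(\boldsymbol{\eta}) \;\geq\; \tfrac{c}{2}\|\boldsymbol{\eta}\|_V^2 \;-\; C,
\]
which gives both boundedness from below and the $+\infty$ behaviour at infinity.

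For weak lower semicontinuity, the leading quadratic form in the first derivatives (membrane) and the quadratic form in the second derivatives (flexural) are both weakly lsc on $H^1$ and $H^2$ respectively. The nonlinear pieces are polynomials of degree three and four in $\nabla\boldsymbol{\eta}$ only, so along a minimizing sequence $\boldsymbol{\eta}_n \rightharpoonup \boldsymbol{\eta}$ in $V$, the Rellich--Kondrachov compact embedding upgrades weak convergence of $\boldsymbol{\eta}_n$ to strong convergence in $H^1$ (for the tangential components) and in $W^{1,p}$ for the normal component at any finite $p$, which is sufficient to pass to the limit in the polynomial terms. The linear loading term is weakly continuous. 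Combining these facts with coercivity in the usual way produces a minimizer.

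I expect the coercivity step to be the main obstacle. The delicate point is to organize the cubic and quartic membrane contributions so that the two smallness assumptions---on $\boldsymbol{f}$ and on the Christoffel symbols---genuinely suffice to absorb every indefinite contribution into the leading positive quadratic form. The explicit threshold asserted in the statement should emerge quantitatively from this bookkeeping, together with the Korn and Poincar\'e constants associated with $V(\omega)$.
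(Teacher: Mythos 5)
Your overall skeleton (direct method: coercivity plus weak lower semicontinuity on $(W^{1,2}_0)^2\times W^{2,2}_0$) is indeed the route behind this statement (which the paper quotes from \cite{Destuyn1} rather than reproving; its own Theorem \ref{s4-t1} strengthens it with the same machinery), but your coercivity mechanism misplaces where the two smallness hypotheses act, and this is a genuine gap. The cubic and quartic membrane contributions do \emph{not} carry Christoffel symbols as small coefficients and cannot be absorbed by near-flatness: with $G^{BS}_{\alpha\beta}(\boldsymbol{\eta})=\gamma_{\alpha\beta}(\boldsymbol{\eta})+\tfrac12\varphi_\alpha(\boldsymbol{\eta})\varphi_\beta(\boldsymbol{\eta})$ and $\varphi_\beta(\boldsymbol{\eta})=\partial_\beta\eta_3-b^\sigma_\beta\eta_\sigma$, the cubic term $a^{\alpha\beta\sigma\tau}\gamma_{\alpha\beta}\varphi_\sigma\varphi_\tau$ survives even when $\Gamma^\sigma_{\alpha\beta}=0$ (a plate); more to the point, it needs no absorption, since by the ellipticity \eqref{s2-e2} the membrane energy is pointwise nonnegative, $W^M_{BS}\ge c_e\sum_{\alpha,\beta}|G^{BS}_{\alpha\beta}|^2$. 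The actual obstruction is that the energy controls $\sum\|G^{BS}_{\alpha\beta}\|_2^2+\sum\|\rho^{BS}_{\alpha\beta}\|_2^2$, whereas $\|\boldsymbol{\eta}\|_{\mathbb{X}_0}$ is controlled (via the equivalence-of-norms/generalized Korn result of \cite[p.75]{Destuyn1}) by $\sum(\|\gamma_{\alpha\beta}\|_2+\|\rho^{BS}_{\alpha\beta}\|_2)$, and bridging $\gamma$ to $G^{BS}$ costs $\sum_\alpha\|\varphi_\alpha\|_4^2$. One must therefore bound $\|\varphi_\alpha\|_{1,2}$ by $\sum\|\rho^{BS}_{\alpha\beta}\|_2$, i.e.\ a Korn-type inequality for the rotation field, using $\rho^{BS}_{\alpha\beta}=\tfrac12\big((\varphi_\alpha)_{|\beta}+(\varphi_\beta)_{|\alpha}\big)$; this is precisely where smallness of the Christoffel symbols enters in \cite{Destuyn1} (perturbation of the flat Korn inequality), and where the present paper instead invokes Lemma \ref{s4-l1} to discard that hypothesis. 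The work of the forces, estimated through this chain, then produces a term of order $l\sum\|\rho^{BS}_{\alpha\beta}\|_2^2$ with $l$ the size of $\boldsymbol{f}$, and it is this quadratic term that must be absorbed into $\tfrac{c_e\varepsilon^3}{6}\sum\|\rho^{BS}_{\alpha\beta}\|_2^2$ --- hence the smallness of the forces. Your bookkeeping, aimed at absorbing the membrane nonlinearities, does not close; moreover the advertised bound $J_{BS}(\boldsymbol{\eta})\ge\tfrac c2\|\boldsymbol{\eta}\|_V^2-C$ is too strong: since $\|\boldsymbol{\eta}\|_{\mathbb{X}_0}$ is only dominated by $\|G^{BS}\|+\|\rho^{BS}\|+\|\rho^{BS}\|^2$, one can only expect growth that is linear in $\|\boldsymbol{\eta}\|_{\mathbb{X}_0}$ (equivalently, boundedness of minimizing sequences), which is what the paper's Steps 1--2 establish.

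A secondary but real flaw is your lower semicontinuity argument: Rellich--Kondrachov does \emph{not} upgrade weak $H^1$ convergence of the tangential components to strong $H^1$ convergence (it compactifies the functions, not their gradients). Fortunately strong convergence is only needed for $\varphi_\alpha(\boldsymbol{\eta}^n)$, which does converge strongly in $L^4$ because $\nabla\eta_3^n$ is bounded in $W^{1,2}\hookrightarrow\hookrightarrow L^4$ and $\eta^n_\sigma$ is compact in $L^4$; the derivatives $\partial_\alpha\eta^n_\beta$ enter only linearly through $\gamma_{\alpha\beta}$, so weak $L^2$ convergence of $G^{BS}_{\alpha\beta}(\boldsymbol{\eta}^n)$ and $\rho^{BS}_{\alpha\beta}(\boldsymbol{\eta}^n)$ together with convexity of the quadratic forms yields the semicontinuity the paper takes for granted. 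Repairing your proof thus requires replacing the ``absorb the cubic/quartic terms'' step by the Korn-type control of $\varphi(\boldsymbol{\eta})$ described above, which is where the hypothesis on the Christoffel symbols genuinely lives.
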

	
	Also, in \cite{Destuyn1}, a geometrical condition is required when no restriction on the applied forces is used.
	\begin{theorem}\label{s1-t2} 
		Assuming the relations
		$$
		(\mathcal{H}) \quad \begin{cases}
			(\varphi_1(\boldsymbol{\eta}), \varphi_2(\boldsymbol{\eta})) = \boldsymbol{0}, \textrm{ and }  (\gamma_{\alpha\beta} (\boldsymbol{\eta})) + \dfrac{1}{2}(h_{\alpha}h_\beta) = \boldsymbol{0}, \\
			\textrm{ where } \boldsymbol{h} = (h_1, h_2) \in (W^{1,2}_0(\omega))^2 \textrm{ and } \boldsymbol{\eta} = \eta_i \boldsymbol{a}^i \\
			\textrm{ with } \eta_\alpha \in W^{1,2}_0 (\omega) \textrm{ and } \eta_3 \in W^{2,2}_0 (\omega),
		\end{cases}
		$$
		implies $\boldsymbol{h} = \boldsymbol{0}$, and $\boldsymbol{\eta} = \boldsymbol{0}$.
		
		Then, the nonlinear problem \eqref{s1-e1} has at least one minimizer in $\mathbb{X}_0$.
	\end{theorem}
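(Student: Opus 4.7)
The plan is to apply the direct method of the calculus of variations on the admissible space $\mathbb{X}_0$. Take a minimizing sequence $(\boldsymbol{\eta}_n) \subset \mathbb{X}_0$ for $J_{BS}$. The three ingredients to establish are: (i) an a priori bound $\|\boldsymbol{\eta}_n\|_{\mathbb{X}_0} \leq C$ (coercivity), (ii) weak compactness in $\mathbb{X}_0$, which yields a subsequence $\boldsymbol{\eta}_n \rightharpoonup \boldsymbol{\eta}^{*}$, and (iii) weak lower semicontinuity of $J_{BS}$. The geometrical hypothesis $(\mathcal{H})$ is used only in step (i); the remaining two ingredients follow from standard reflexivity, convexity of the flexural (quadratic) part, and Rellich compactness for the nonlinear membrane part.

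For the coercivity step I would argue by contradiction, via a blow-up argument. Suppose the minimizing sequence is unbounded and set $t_n := \|\boldsymbol{\eta}_n\|_{\mathbb{X}_0} \to \infty$, $\boldsymbol{\xi}_n := \boldsymbol{\eta}_n / t_n$ so $\|\boldsymbol{\xi}_n\|_{\mathbb{X}_0} = 1$. Since the linear forcing is bounded below by $-C t_n$ while $J_{BS}(\boldsymbol{\eta}_n)$ stays bounded above, the stored energy $\int_\omega W_{BS}(\boldsymbol{\eta}_n)\sqrt{a}\, dy$ grows at most linearly in $t_n$. Expanding $\gamma_{\alpha\beta}(\boldsymbol{\eta}_n)$ as a sum of a linear and a quadratic part in $\boldsymbol{\eta}_n$ and introducing the auxiliary rescaling $h_{n,\alpha} := t_n^{-1/2} \partial_\alpha \eta_{n,3}$ (or the appropriate analogue dictated by the concrete form of $\gamma_{\alpha\beta}$), one sees that $\boldsymbol{h}_n$ is bounded in $(W^{1,2}_0(\omega))^2$, essentially because the bending energy controls $t_n^{-1} \|\partial^2 \eta_{n,3}\|_{L^2}^2$. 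By the compact embedding $W^{1,2}(\omega) \hookrightarrow L^4(\omega)$ in dimension two, both $\boldsymbol{\xi}_n$ and $\boldsymbol{h}_n$ converge strongly in $L^4$, so the products $h_{n,\alpha} h_{n,\beta}$ pass to their limits in $L^2$. Passing to weak/strong limits produces a pair $(\boldsymbol{\xi}, \boldsymbol{h})$ satisfying exactly the two degeneracy relations of $(\mathcal{H})$; the hypothesis then forces $\boldsymbol{\xi} = \boldsymbol{0}$ and $\boldsymbol{h} = \boldsymbol{0}$. Combining this with the strong convergence extracted from compactness on the lower-order pieces of the $\mathbb{X}_0$-norm contradicts $\|\boldsymbol{\xi}_n\|_{\mathbb{X}_0} = 1$.

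Granted the a priori bound, reflexivity provides $\boldsymbol{\eta}_n \rightharpoonup \boldsymbol{\eta}^{*}$ in $\mathbb{X}_0$. The flexural contribution $W^F_{BS}(\boldsymbol{\eta})$ is a convex quadratic functional of the second derivatives of $\eta_3$ and the first derivatives of $\eta_\alpha$, hence weakly lower semicontinuous on $\mathbb{X}_0$. For the nonlinear membrane contribution $W^M_{BS}$, Rellich compactness upgrades weak convergence of the first derivatives to strong $L^4$-convergence, so the quadratic and quartic pieces of the integrand pass to the limit by dominated convergence. The linear forcing term is weakly continuous. Consequently $J_{BS}(\boldsymbol{\eta}^{*}) \leq \liminf_{n} J_{BS}(\boldsymbol{\eta}_n) = \inf_{\mathbb{X}_0} J_{BS}$, so $\boldsymbol{\eta}^{*}$ is a minimizer.

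The main obstacle is the coercivity argument. The difficulty is that $\gamma_{\alpha\beta}(\boldsymbol{\eta}_n)$ mixes terms of different homogeneities in $\boldsymbol{\eta}_n$, so the rescaling $\boldsymbol{\eta}_n \mapsto \boldsymbol{\eta}_n / t_n$ does not act uniformly on it; identifying the correct auxiliary field $\boldsymbol{h}_n$ with the right power of $t_n$ so that the limit pair $(\boldsymbol{\xi}, \boldsymbol{h})$ meets both conditions of $(\mathcal{H})$ simultaneously is where the technical care of the proof is concentrated. Once this algebraic scaling is in place, the rest consists of standard compactness manipulations.
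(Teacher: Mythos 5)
You should note first that the paper does not actually prove this statement: Theorem \ref{s1-t2} is quoted from \cite{Destuyn1} as background, so your proposal can only be measured against the standard Destuynder--Nedelec compactness argument, which is indeed the strategy you sketch (direct method, with hypothesis $(\mathcal{H})$ entering only through a blow-up proof of coercivity). The skeleton is right, and your identification of the scaling difficulty is the correct one, but two points need repair. First, the auxiliary field must be $h_{n,\alpha}:=t_n^{-1/2}\varphi_\alpha(\boldsymbol{\eta}_n)=t_n^{-1/2}(\partial_\alpha\eta_{n,3}-b^\sigma_\alpha\eta_{n,\sigma})$, not $t_n^{-1/2}\partial_\alpha\eta_{n,3}$: the quadratic part of $G^{BS}_{\alpha\beta}$ is $\tfrac12\varphi_\alpha\varphi_\beta$, and only with this choice does $t_n^{-1}G^{BS}_{\alpha\beta}(\boldsymbol{\eta}_n)=\gamma_{\alpha\beta}(\boldsymbol{\xi}_n)+\tfrac12 h_{n,\alpha}h_{n,\beta}$, which is what produces the second relation of $(\mathcal{H})$ in the limit. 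Moreover its boundedness in $(W^{1,2}_0(\omega))^2$ is not "because the bending energy controls $t_n^{-1}\|\partial^2\eta_{n,3}\|_{L^2}^2$"; the flexural energy controls $\|\rho^{BS}_{\alpha\beta}(\boldsymbol{\eta}_n)\|_2^2\lesssim t_n$, and one must pass through the Korn-type inequality of Lemma \ref{s4-l1} (using $\rho^{BS}_{\alpha\beta}(\boldsymbol{\eta})=\tfrac12((\varphi_\alpha(\boldsymbol{\eta}))_{|\beta}+(\varphi_\beta(\boldsymbol{\eta}))_{|\alpha})$) to get $\|\varphi_\alpha(\boldsymbol{\eta}_n)\|_{1,2}\lesssim t_n^{1/2}$.

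The genuine gap is your closing contradiction. Knowing that the weak limit satisfies $(\mathcal{H})$, hence $\boldsymbol{\xi}=\boldsymbol{0}$ and $\boldsymbol{h}=\boldsymbol{0}$, does not by itself contradict $\|\boldsymbol{\xi}_n\|_{\mathbb{X}_0}=1$: "strong convergence on the lower-order pieces of the $\mathbb{X}_0$-norm" (Rellich) says nothing about the top-order derivatives, and a unit sequence can perfectly well converge weakly to zero. What closes the argument is the rigidity/norm-equivalence result used twice in this paper (\cite{Destuyn1}, p.~75): $\|\boldsymbol{\zeta}\|_{\mathbb{X}_0}\le C\sum_{\alpha,\beta}(\|\gamma_{\alpha\beta}(\boldsymbol{\zeta})\|_2+\|\rho^{BS}_{\alpha\beta}(\boldsymbol{\zeta})\|_2)$. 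One then checks that both strain measures of $\boldsymbol{\xi}_n$ tend to zero \emph{strongly} in $L^2$: $\rho^{BS}_{\alpha\beta}(\boldsymbol{\xi}_n)=t_n^{-1}\rho^{BS}_{\alpha\beta}(\boldsymbol{\eta}_n)=O(t_n^{-1/2})$, and $\gamma_{\alpha\beta}(\boldsymbol{\xi}_n)=t_n^{-1}G^{BS}_{\alpha\beta}(\boldsymbol{\eta}_n)-\tfrac12 h_{n,\alpha}h_{n,\beta}\to 0$ because $h_{n,\alpha}\to h_\alpha=0$ strongly in $L^4$. This yields $\|\boldsymbol{\xi}_n\|_{\mathbb{X}_0}\to 0$, the desired contradiction; without this ingredient the blow-up argument does not close. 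A minor further inaccuracy: the membrane term does not "pass to the limit by dominated convergence," since $\gamma_{\alpha\beta}(\boldsymbol{\eta}_n)$ involves first derivatives of $\eta_{n,\alpha}$ which converge only weakly in $L^2$; one instead gets $G^{BS}_{\alpha\beta}(\boldsymbol{\eta}_n)\rightharpoonup G^{BS}_{\alpha\beta}(\boldsymbol{\eta}^*)$ weakly in $L^2$ (linear part weakly, quadratic part strongly via $L^4$ compactness of $\varphi_\alpha$) and concludes by convexity of the quadratic form $a^{\alpha\beta\sigma\tau}$, which is all that weak lower semicontinuity requires.
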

	
	It is also worth mentioning that the uniqueness of the minimizer has been partially established in \cite{giang}, where the author considered the shell with the middle surface being a portion of a cylinder and the applied force in a particular class where elements can be made arbitrarily small or big. The existence of a solution to the boundary value problem associated with the energy functional is proved via the Inverse Function Theorem, and the uniqueness part is then followed by a comparison of the values of the energy at this solution and other displacement fields.
	
	This paper first aims to extend the result given in Theorem \ref{s1-t1}. More specifically, we will prove an existence theorem of a minimizer for the problem \eqref{s1-e1} that holds in a neighborhood of a class of special forces. These special forces can be made arbitrarily big as well as arbitrarily small. Furthermore, we will show that our existence result is valid for general shells, irrespective of their shape, and thus, our result is an improvement of Theorem \ref{s1-t1}. We will also prove that the minimizer found in this fashion is unique under the impact of small applied forces.
	
	The organization of this paper is as follows. In Section \ref{sec2}, we present notation and definitions. Section \ref{sec4} first introduces a set of special applied forces whose magnitudes can be made arbitrarily small or big. Then, we will state and prove our existence and uniqueness results.
	
	\section{Notation and definitions}\label{sec2}
	
	Throughout this paper, Greek indices and exponents range in the set $\{1,2\}$, while Latin indices and exponents range in the set $\{1,2,3\}$ (unless when they are used for indexing sequences). The summation convention with respect to repeated indexes and exponents is used.
	
	Vector fields are denoted by boldface letters. The Euclidean norm, the inner product and the vector product of two vectors $\boldsymbol{u}$ and $\boldsymbol{v}$ in $ \mathbb{R}^3$ are respectively denoted $|\boldsymbol{u}|$, $\boldsymbol{u} \cdot \boldsymbol{v}$ and $\boldsymbol{u} \wedge \boldsymbol{v}$.
	
	A domain in $\mathbb{R}^2$ is a bounded, connected, open subset $\omega \subset \mathbb{R}^2$ with a Lipschitz-continuous boundary $\gamma:= \partial\omega$, the set $\omega$ being locally on the same side of $\gamma$. A generic point in the set $\omega$ is denoted $y = (y_\alpha)$ and partial derivatives, in the classical or distributional sense, are denoted $\partial_\alpha:= \partial/\partial y_\alpha$ and $\partial_{\alpha\beta}:= \partial^2/\partial y_\alpha \partial y_\beta$.
	
	The usual norm of the Lebesgue space $L^p (\omega)$, $1 \leq p \leq \infty$, is denoted by $\| \cdot \|_p$. The usual norm of the Sobolev spaces $W^{m,p}(\omega)$, $m \in \mathbb{N}^*$, $1 \leq p \leq \infty$, is denoted by $\| \cdot \|_{m,p}$. 
	
	
	The middle surface of the reference configuration of a shell is defined by $S:= \boldsymbol{\theta}(\omega)$, where $\omega \subset \mathbb{R}^2$ is a domain and $\boldsymbol{\theta} \in \mathcal{C}^3 (\overline{\omega};\mathbb{R}^3)$ is an immersion, i.e., the two vector fields $\boldsymbol{a}_\alpha:= \partial_\alpha \boldsymbol{\theta}$ are linearly independent at every $y \in \overline{\omega}$. Next we define the vector field $\boldsymbol{a}_3: \overline{\omega} \to \mathbb{R}^3$ by 
	\begin{equation*}
		\boldsymbol{a}_3(y) := \dfrac{\boldsymbol{a}_1(y) \wedge \boldsymbol{a}_2(y)}{\left|\boldsymbol{a}_1(y) \wedge \boldsymbol{a}_2(y) \right|} \textrm{ for all } y\in\overline{\omega},
	\end{equation*} 
	which is of class $\mathcal{C}^2$ over $\overline{\omega}$. Note that $\boldsymbol{a}_3(y)$ is the positively oriented unit vector normal to the surface $S$ at the point $\boldsymbol{\theta}(y)$, while the two vectors $\boldsymbol{a}_\alpha$ form a basis in the tangent plane to $S$ at the same point. The three vectors $\boldsymbol{a}_i (y)$ constitute the \textit{covariant basis} in $\mathbb{R}^3$ at the point $\boldsymbol{\theta}(y)$, while the three vectors $\boldsymbol{a}^i (y)$ uniquely defined by the relations
	$$
	\boldsymbol{a}^i (y) \cdot \boldsymbol{a}_j (y) = \delta^i_j,
	$$
	where $\delta^i_j$ is the Kronecker symbol, constitute the \textit{contravariant basis} at the point $\boldsymbol{\theta}(y) \in S$. As a consequence, any vector field $\boldsymbol{\zeta} : \omega \to \mathbb{R}^3$ can be decomposed over either of these bases as
	$$
	\boldsymbol{\zeta} = \zeta_i \boldsymbol{a}^i = \zeta^i \boldsymbol{a}_i,
	$$
	for some functions $\xi_i: \omega \to \mathbb{R}$ and $\xi^i : \omega \to \mathbb{R}$.
	
	The area element of the surface $S$ is $\sqrt{a(y)}dy$, where
	$$
	a:=|\boldsymbol{a}_1 \wedge \boldsymbol{a}_2| \text{ in } \omega.
	$$	
	
	The covariant components $a_{\alpha\beta} \in \mathcal{C}^2 (\overline{\omega})$ and $b_{\alpha\beta} \in \mathcal{C}^1 (\overline{\omega})$ of respectively the first and second fundamental forms of $S = \boldsymbol{\theta} (\overline{\omega})$ are defined by 
	$$a_{\alpha\beta} := \boldsymbol{a}_\alpha \cdot \boldsymbol{a}_\beta \textrm{ and }  b_{\alpha\beta} :=\boldsymbol{a}_3 \cdot \partial_\alpha\boldsymbol{a}_\beta .$$
	The contravariant components of the first fundamental form are the components $a^{\alpha\beta} \in \mathcal{C}^2 (\overline{\omega})$ of the inverse matrix 
	$$
	(a^{\alpha\beta}(y)):=(a_{\alpha\beta}(y))^{-1}, \ y\in\overline{\omega}. 
	$$
	
	The mixed components $b^\alpha_\beta \in \mathcal{C}^1 (\overline{\omega})$ of the second fundamental
	form and the Christoffel symbols $\Gamma^\sigma_{\alpha\beta} \in \mathcal{C}^1 (\overline{\omega})$ of $S$ are respectively defined by
	$$b^\alpha_\beta = a^{\alpha\sigma}b_{\sigma\beta} \textrm{ in } \overline{\omega},$$
	and
	$$
	\Gamma^\sigma_{\alpha\beta} := \boldsymbol{a}^\sigma \cdot \partial_\beta \boldsymbol{a}_\alpha \textrm{ in } \overline{\omega}.
	$$
	
	A displacement field of the middle surface of the shell $S = \boldsymbol{\theta} (\overline{\omega})$ is a smooth enough vector field $\boldsymbol{\eta}: \omega \to \mathbb{R}^3$. Given an arbitrary displacement field $\boldsymbol{\eta} = \eta_i \boldsymbol{a}^i$, the functions
	$$ a_{\alpha\beta} (\boldsymbol{\theta}+\boldsymbol{\eta}) := \boldsymbol{a}_\alpha (\boldsymbol{\theta}+\boldsymbol{\eta}) \cdot \boldsymbol{a}_\beta (\boldsymbol{\theta}+\boldsymbol{\eta}), \textrm{ where } \boldsymbol{a}_\alpha (\boldsymbol{\theta}+\boldsymbol{\eta}) := \partial_\alpha (\boldsymbol{\theta}+\boldsymbol{\eta}),$$
	denote the covariant components of the first fundamental form of the deformed surface $(\boldsymbol{\theta}+\boldsymbol{\eta})(\omega)$, the functions
	$$ G_{\alpha\beta} (\boldsymbol{\theta}+\boldsymbol{\eta}) := \dfrac{1}{2} (a_{\alpha\beta}(\boldsymbol{\theta}+\boldsymbol{\eta})-a_{\alpha\beta})$$
	denote the covariant components of the change of metric tensor field associated with the deformation $\boldsymbol{\theta}+\boldsymbol{\eta}$ of $S$, the functions
	$$
	\gamma_{\alpha\beta} (\boldsymbol{\eta}) := \dfrac{1}{2}(\partial_\alpha \eta_\beta + \partial_\beta \eta_\alpha) - \Gamma^\sigma_{\alpha\beta} \eta_\sigma - b_{\alpha\beta}\eta_3
	$$
	denote the covariant components of the linearized change of metric tensor field with respect to $\boldsymbol{\eta}$, and the functions
	$$
	G^{BS}_{\alpha\beta}(\boldsymbol{\eta}) := \gamma_{\alpha\beta}(\boldsymbol{\eta}) + \dfrac{1}{2}\varphi_\alpha(\boldsymbol{\eta})\varphi_{\beta}(\boldsymbol{\eta}),
	$$
	denote the covariant components of the "modified change of metric tensor field" associated with the displacement $\boldsymbol{\eta}$, where
	$$
	\varphi_{\beta}(\boldsymbol{\eta}) := \partial_\beta \eta_3 - b^\sigma_\beta \eta_\sigma.
	$$
	Note that the original change of the metric tensor field is defined by
	$$
	G_{\alpha\beta} (\boldsymbol{\theta} + \boldsymbol{\eta}) = G^{BS}_{\alpha\beta} (\boldsymbol{\eta}) + \dfrac{1}{2} a^{\sigma\tau}(\partial_\alpha \eta_\sigma - \Gamma_{\alpha\sigma}^\gamma\eta_\gamma - b_{\alpha\sigma} \eta_3)(\partial_\beta \eta_\tau - \Gamma_{\beta\tau}^\gamma\eta_\gamma - b_{\beta\tau} \eta_3).
	$$
	If the two vectors $\boldsymbol{a}_\alpha (\boldsymbol{\theta}+ \boldsymbol{\eta}
	) $ are linearly independent, then the unit vector field
	\begin{equation*}	                  \boldsymbol{a}_3(\boldsymbol{\theta}+\boldsymbol{\eta}):=\frac{{\partial}_1(\boldsymbol{\theta}+\boldsymbol{\eta}) \wedge {\partial}_2(\boldsymbol{\theta}+\boldsymbol{\eta})}{|{\partial}_1(\boldsymbol{\theta}+\boldsymbol{\eta}) \wedge {\partial}_2(\boldsymbol{\theta}+\boldsymbol{\eta})|}
	\end{equation*}
	is well-defined and normal to the deformed surface $(\boldsymbol{\theta}+\boldsymbol{\eta} )(\omega)$. The functions
	\begin{equation*}
		b_{\alpha\beta}(\boldsymbol{\theta}+\boldsymbol{\eta}) := \boldsymbol{a}_3 (\boldsymbol{\theta}+\boldsymbol{\eta}) \cdot {\partial}_{\alpha\beta}(\boldsymbol{\theta}+\boldsymbol{\eta})
	\end{equation*}
	denote the covariant components of the second fundamental form of the deformed surface $(\boldsymbol{\theta}+\boldsymbol{\eta})(\omega)$, the functions
	$$
	R_{\alpha\beta}(\boldsymbol{\theta}+\boldsymbol{\eta}) :=  b_{\alpha\beta}(\boldsymbol{\theta}+\boldsymbol{\eta}) - b_{\alpha\beta}
	$$
	denote the covariant components of the change of curvature tensor field, the functions
	\begin{align*}
		\rho_{\alpha\beta}(\boldsymbol{\eta}) &:= \partial_{\alpha\beta} \eta_3 - \Gamma^\sigma_{\alpha\beta} \partial_\sigma \eta_3 - b^\sigma_\alpha b_{\sigma\beta} \eta_3 \\
		& \quad +b^\sigma_\alpha (\partial_\beta \eta_\sigma - \Gamma^\tau_{\beta\sigma}\eta_\tau) +b^\tau_\beta (\partial_\alpha \eta_\tau - \Gamma^\sigma_{\alpha\tau}\eta_\sigma) \\
		& \quad + (\partial_\alpha b^\tau_\beta + \Gamma^\tau_{\alpha\sigma}b^\sigma_\beta - \Gamma^\sigma_{\alpha\beta}b^\tau_\sigma)\eta_\tau
	\end{align*}
	denote the linearized change of curvature tensor field with respect to $\boldsymbol{\eta}$, and the functions
	\begin{align*}
		\rho^{BS}_{\alpha\beta} (\boldsymbol{\eta}) &:= \dfrac{1}{2}\left[ \left(\partial_\beta\varphi_{\alpha}(\boldsymbol{\eta}) - \Gamma^\tau_{\alpha\beta} \varphi_\tau (\boldsymbol{\eta})\right)+ \left(\partial_\alpha \varphi_{\beta}(\boldsymbol{\eta}) - \Gamma^\tau_{\beta\alpha} \varphi_\tau (\boldsymbol{\eta}) \right) \right] \\
		& = \partial_{\alpha\beta} \eta_3 -\dfrac{1}{2}\left( \partial_\beta b^\sigma_\alpha + \partial_{\alpha} b^\sigma_\beta \right) \eta_\sigma - \dfrac{1}{2} (b^\sigma_\alpha \partial_\beta \eta_\sigma + b^\sigma_\beta \partial_\alpha \eta_\sigma) \\
		& - \Gamma^\tau_{\alpha\beta}\partial_\tau \eta_3 + \Gamma^\tau_{\alpha\beta}\eta_\sigma b^\sigma_\tau,
	\end{align*}
	denote the "modified linearized change of curvature tensor field" with respect to $\boldsymbol{\eta}$.
	
	Let $\boldsymbol{f} = f^i \boldsymbol{a}_i$, where $f^i \in L^2 (\omega)$,  be the density of forces acting on the shell. The modified nonlinear Budiansky-Sanders shell model proposed by Destuynder \cite{Destuyn1, Destuyn2}) states that the unknown displacement $\boldsymbol{\eta}: \omega \to \mathbb{R}^3$ of the middle surface $S =\boldsymbol{\theta}(\overline{\omega})$ of the shell subjected to applied forces should minimize the functional
	\begin{equation}\label{s2-e1}
		\boldsymbol{\eta} \in \mathbb{X}_0 \to J_{BS}({\boldsymbol{\eta}}) := \int\limits_{{\omega}}          W_{BS}(\boldsymbol{\eta})\,\sqrt{a} d y  - \int\limits_\omega \boldsymbol{f} \cdot {\boldsymbol{\eta}}\sqrt{a}dy,
	\end{equation}
	defined on the space 
	\begin{equation*}
		\mathbb{X}_0 := \left\{ \boldsymbol{\eta} = \eta_i \boldsymbol{a}^i, \textrm{ } \eta_\alpha \in W^{1,2}_0 (\omega), \textrm{ } \eta_3 \in W^{2,2}_0(\omega)\right\},
	\end{equation*}
	where
	$$
	W^{1,2}_0 (\omega) = \left\{ u \in W^{1,2}(\omega), u = 0 \textrm{ on } \gamma \right\},
	$$
	$$
	W^{2,2}_0 (\omega) = \left\{ u \in W^{2,2}(\omega), u = \partial_\nu u = 0 \textrm{ on } \gamma \right\},
	$$
	and the norm on $\mathbb{X}_0$ is defined by
	$$
	\|\boldsymbol{\eta}\|_{\mathbb{X}_0} := \|\eta_1\|_{1,2} +\|\eta_2\|_{1,2} + \|\eta_3\|_{2,2} \textrm{ for all } \boldsymbol{\eta} \in \mathbb{X}_0.
	$$
	
	In \eqref{s2-e1}, 
	$$
	W_{BS}(\boldsymbol{\eta}) := \dfrac{\varepsilon}{2} W^M_{BS}(\boldsymbol{\eta}) + \dfrac{\varepsilon^3}{6}W^F_{BS}(\boldsymbol{\eta}),
	$$
	with 
	$$
	W^M_{BS}(\boldsymbol{\eta}) := a^{\alpha\beta\sigma\tau}G^{BS}_{\alpha\beta}(\boldsymbol{\eta})G_{\sigma\tau}^{BS}(\boldsymbol{\eta})
	$$
	denotes the "modified membrane energy", and
	$$
	W^F_{BS}(\boldsymbol{\eta}) := a^{\alpha\beta\sigma\tau}\rho^{BS}_{\alpha\beta}(\boldsymbol{\eta})\rho_{\sigma\tau}^{BS}(\boldsymbol{\eta})
	$$
	denotes the "modified flexural energy" associated with the displacement field $\boldsymbol{\eta}$ of the middle surface $S$, the functions
	\begin{equation*}
		a^{\alpha\beta\sigma\tau} := \dfrac{4\lambda\mu}{\lambda+2\mu} a^{\alpha\beta}a^{\sigma\tau} + 2\mu (a^{\alpha\sigma}a^{\beta\tau}+a^{\alpha\tau}a^{\beta\sigma})
	\end{equation*}
	denote the contravariant components of the two-dimensional elasticity tensor of the shell, and $\lambda \geq 0$ and $\mu > 0$ are the Lamé constants of the elastic material constituting the shell. Notice that there exists a constant $c_e = c_e (\omega, \boldsymbol{\theta}, \lambda, \mu) > 0$ such that
	\begin{equation}\label{s2-e2}
		c_e\sum\limits_{\alpha,\beta} |t_{\alpha\beta}|^2 \leq  a^{\alpha\beta\sigma\tau}(y) t_{\sigma\tau}t_{\alpha\beta},
	\end{equation}
	for all $y \in \overline{\omega}$ and all symmetric matrices $(t_{\alpha\beta})$ (see, e.g., \cite[Theorem 3.3-2]{ciarlet}).
	
	It is easy to see that the functional $J_{BS}$ is weakly lower semicontinuous. However, without additional assumptions on the geometry of the shell and/or the ones on the applied forces, it is still not known whether the functional $J_{BS}$ is coercive. Therefore, the condition on the magnitude of the applied forces given in the remaining part of this paper is crucial to obtain the existence of a minimizer.

	\section{Main results}\label{sec4}
	
	We have the following definition, which is convenient for stating our main results.
	\begin{definition}\label{s3-d1}
		We denote $\mathcal{A}$ the set consists of vector fields $\boldsymbol{f} = f^i a_i$, where $f^i \in L^2(\omega)$, satisfying that there exists a function $g \in W^{1,2}(\omega)$ such that
		$$
		f^1 = (b^1_1 + b^1_2)g, \textrm{ } f^2 = (b^2_1 + b^2_2)g,\textrm{ and }
		f^3 = \partial_1 g + \partial_2 g.
		$$
	\end{definition}
	
	\begin{proposition}\label{s3-p1}
		The set $\mathcal{A}$ is nonempty. Moreover, for each $M>0$, there exists $\boldsymbol{f} \in \mathcal{A}$ such that 
		$$
		\|f^1\|_2 + \|f^2\|_2 + \|f^3\|_2 > M.
		$$
		Furthermore, for each $\delta > 0$, there exists $\boldsymbol{f} \in \mathcal{A}$ such that 
		$$
		\|f^1\|_2 + \|f^2\|_2 + \|f^3\|_2 < \delta.
		$$
	\end{proposition}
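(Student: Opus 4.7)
The plan rests on observing that the three defining conditions for $\mathcal{A}$ are linear in $g$: the correspondence $g \mapsto \boldsymbol{f}$ is a linear map $W^{1,2}(\omega) \to (L^2(\omega))^3$, and $\mathcal{A}$ is a linear subspace. In particular, once a single $\boldsymbol{f}_0 \in \mathcal{A}$ with strictly positive total component norm $\|f_0^1\|_2 + \|f_0^2\|_2 + \|f_0^3\|_2$ is exhibited, every positive scaling $t\boldsymbol{f}_0$ will again lie in $\mathcal{A}$ (corresponding to $tg_0$) with component norms multiplied by $t$, so all three assertions reduce to choosing $t$ appropriately.

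First, I would dispose of nonemptiness and the small-norm claim together: the choice $g \equiv 0$ gives $\boldsymbol{f} \equiv \boldsymbol{0} \in \mathcal{A}$ and satisfies $\|f^1\|_2 + \|f^2\|_2 + \|f^3\|_2 = 0 < \delta$ for every $\delta > 0$. Alternatively, scaling a nontrivial $\boldsymbol{f}_0$ (constructed in the next step) by a sufficiently small $t>0$ also works.

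The substantive step is the construction of one explicit nontrivial $\boldsymbol{f}_0$. The first two components $f^1 = g(b^1_1 + b^1_2)$ and $f^2 = g(b^2_1 + b^2_2)$ carry the mixed curvature factors $b^\alpha_\beta$, which could in principle vanish for certain geometries of $S$ (for instance, on a flat portion), so I would not try to force these to be nonzero. Instead, the third component $f^3 = \partial_1 g + \partial_2 g$ depends only on $g$ and is under direct control. Taking $g_0(y_1, y_2) := y_1$, which is a polynomial and hence lies in $W^{1,2}(\omega)$ because $\omega$ is bounded, one gets $f_0^3 \equiv 1$ on $\omega$ and therefore $\|f_0^3\|_2 = \sqrt{|\omega|} > 0$. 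Scaling by any $t > M/\sqrt{|\omega|}$ then yields $\boldsymbol{f}_t \in \mathcal{A}$ with $\|f_t^1\|_2 + \|f_t^2\|_2 + \|f_t^3\|_2 \geq \|f_t^3\|_2 > M$.

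I do not anticipate any genuine obstacle; the proposition is essentially a linear-scaling statement on the subspace $\mathcal{A}$. The only conceptual subtlety worth flagging is that the non-vanishing of $f^1$ and $f^2$ cannot be guaranteed without geometric hypotheses on $S$, which I sidestep by engineering $g_0$ so that $\partial_1 g_0 + \partial_2 g_0$ is a nonzero constant, making $f_0^3$ alone responsible for the positive norm.
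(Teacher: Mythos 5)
Your proposal is correct and follows essentially the same strategy as the paper: exhibit one $g \in W^{1,2}(\omega)$ with $\|\partial_1 g + \partial_2 g\|_2 > 0$ and then exploit the linearity of $g \mapsto \boldsymbol{f}$ by scaling $g$ with a large or small constant (the paper also gets nonemptiness from $\boldsymbol{0} \in \mathcal{A}$). The only difference is cosmetic: you take $g(y) = y_1$, while the paper uses a rescaled smooth bump function supported in $\omega$; since no boundary condition is imposed on $g$, your simpler choice is equally valid.
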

	\begin{proof}
		Since $\boldsymbol{0} \in \mathcal{A}$, it immediately follows that the set $\mathcal{A}$ is nonempty. 
		
		Next, let $z$ be any arbitrary point in $\omega$. Since $\omega$ is open, there exists a number $n >0$ such that $\overline{B(z, \frac{1}{n})} \subset \omega$. We define the function 
		$p: \mathbb{R}^2 \to \mathbb{R}$ given by
		$$
		p(x) := \begin{cases}
			e^{1/(|x-z|^2 - 1)} & \textrm{if } |x-z| < 1, \\
			0 & \textrm{if } |x-z| \geq 1.
		\end{cases}
		$$
		Then it is easy to see that $p \in \mathcal{C}_c^\infty (\mathbb{R}^2)$, $\textrm{supp } p \subset \overline{B(z,1)}$. Therefore, the function 
		$$
		p_n(x) := p(n(x-z)+z),
		$$
		belongs to the space $\mathcal{C}_c^\infty ({\omega})$ and thus it belongs to $W^{1,2}(\omega)$. Next, it is easy to see that 
		$$
		0 < \| \partial_1 p_n + \partial_2 p_n\|_\infty < \infty.
		$$
		and thus
		$$
		0 < \| \partial_1 p_n + \partial_2 p_n\|_2 < \infty.
		$$
		
		Now we define
		$$
		g = kp_n, \textrm{ in } \omega
		$$
		where $k$ is a real number, and let 
		$$
		f^1 = (b^1_1 + b^1_2)g, \textrm{ } f^2 = (b^2_1 + b^2_2)g,\textrm{ and }
		f^3 = \partial_1 g + \partial_2 g.
		$$
		Then, we obtain the desired results by choosing an appropriately large (respectively, small) value for $k$.
	\end{proof}
	
	\begin{remark}\label{s3-r1} 
		By applying the integration by parts, it is easy to see that 
		$$\int_\omega \boldsymbol{f} \cdot \boldsymbol{\eta} \sqrt{a} dy = - \int_\omega (\varphi_1(\boldsymbol{\eta}) + \varphi_2 (\boldsymbol{\eta}))g\sqrt{a}dy$$
		for $\boldsymbol{f} \in \boldsymbol{\mathcal{A}}$. This is the motivation for introducing the set $\boldsymbol{\mathcal{A}}$, as later we can see that this set is crucial for Step 1 of the proof of Theorem \ref{s4-t1}.
	\end{remark}
	The main results of this paper are as follows.
	\begin{theorem}\label{s4-t1} 
		Assuming that the shell with middle surface $S = \boldsymbol{\theta}(\overline{\omega})$, where $\boldsymbol{\theta} \in \mathcal{C}^2 (\overline{\omega};\mathbb{R}^3)$ is an immersion (cf. Section \ref{sec2}). Let $\overline{\boldsymbol{f}} := \overline{f}^i\boldsymbol{a}_i$, where $\overline{f}^i \in L^2(\omega)$, be any arbitrary element in $\mathcal{A}$. We define the density of the applied force $\boldsymbol{f} = f^i \boldsymbol{a}_i$ by
		$$
		f^i = \overline{f}^i + h^i, \textrm{ for } i = 1, 2, 3,
		$$
		where $h^i \in L^2(\omega)$.
		
		Then, provided the norms $\|h^i\|_2$ are small enough, the problem \eqref{s1-e1} has at least one minimizer.
	\end{theorem}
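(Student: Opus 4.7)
The plan is to prove existence by the direct method of the calculus of variations: weak lower semicontinuity plus coercivity of $J_{BS}$ on $\mathbb{X}_0$. Weak lower semicontinuity is essentially standard—convexity of the flexural term in the highest-order derivatives $\partial_{\alpha\beta}\eta_3$, combined with continuity of the lower-order membrane and force terms along weakly convergent sequences via the Rellich–Kondrachov compactness—and is already asserted in the text. The heart of the proof is therefore to establish coercivity of $J_{BS}$ for $\boldsymbol{f}=\overline{\boldsymbol{f}}+\boldsymbol{h}$ when $\sum_i\|h^i\|_2$ is sufficiently small.

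The decisive step is an integration-by-parts identity tailored to Definition \ref{s3-d1}. Since $\eta_3\in W^{2,2}_0(\omega)$ vanishes on $\gamma$, the contribution of $\overline{f}^{3}=\partial_1 g+\partial_2 g$ can be integrated by parts; combining the three components $\overline{f}^{i}\eta_i$ and using $\varphi_\beta(\boldsymbol{\eta})=\partial_\beta\eta_3-b^\sigma_\beta\eta_\sigma$ yields, for every $\boldsymbol{\eta}\in\mathbb{X}_0$,
\begin{equation*}
\int_\omega \overline{\boldsymbol{f}}\cdot\boldsymbol{\eta}\,\sqrt{a}\,dy \;=\; -\int_\omega g\bigl(\varphi_1(\boldsymbol{\eta})+\varphi_2(\boldsymbol{\eta})\bigr)\sqrt{a}\,dy \;-\; \int_\omega g\,\eta_3\bigl(\partial_1\sqrt{a}+\partial_2\sqrt{a}\bigr)\,dy.
\end{equation*}
The gain is crucial: the linear work of $\overline{\boldsymbol{f}}$ has been re-expressed in terms of the very quantities $\varphi_\alpha(\boldsymbol{\eta})$ that drive the nonlinear correction inside $G^{BS}_{\alpha\beta}(\boldsymbol{\eta})=\gamma_{\alpha\beta}(\boldsymbol{\eta})+\tfrac{1}{2}\varphi_\alpha(\boldsymbol{\eta})\varphi_\beta(\boldsymbol{\eta})$, rather than on $\boldsymbol{\eta}$ itself, which opens the door to absorbing it into the membrane/flexural energies.

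Starting from the ellipticity estimate \eqref{s2-e2}, I would obtain
\begin{equation*}
\int_\omega W_{BS}(\boldsymbol{\eta})\sqrt{a}\,dy \;\ge\; c_1\sum_{\alpha,\beta}\|G^{BS}_{\alpha\beta}(\boldsymbol{\eta})\|_2^2 + c_2\sum_{\alpha,\beta}\|\rho^{BS}_{\alpha\beta}(\boldsymbol{\eta})\|_2^2,
\end{equation*}
and then apply Cauchy–Schwarz and Young's inequality with small parameter $\delta>0$ to the identity above, producing a term of the form $\delta\sum_\alpha\|\varphi_\alpha(\boldsymbol{\eta})\|_2^2$ plus a constant multiple of $\|g\|_2^2$. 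Because $\eta_\sigma\in W^{1,2}_0(\omega)$ and the tangential first derivatives of $\eta_3$ vanish on $\gamma$, one has $\varphi_\alpha(\boldsymbol{\eta})\in W^{1,2}_0(\omega)$, so Poincaré plus the definition of $\rho^{BS}_{\alpha\beta}$ (which, symmetrized, gives $\partial_\beta\varphi_\alpha+\partial_\alpha\varphi_\beta=2\rho^{BS}_{\alpha\beta}+2\Gamma^\tau_{\alpha\beta}\varphi_\tau$) bound $\|\varphi_\alpha\|_{1,2}$ by $\|\rho^{BS}\|_2$ modulo lower-order terms; the two-dimensional Sobolev embedding $W^{1,2}_0(\omega)\hookrightarrow L^4(\omega)$ then controls the quartic $\|\varphi_\alpha\|_4^2$ contributions that appear when translating between $\gamma_{\alpha\beta}$ and $G^{BS}_{\alpha\beta}$. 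The perturbation satisfies $|\int_\omega\boldsymbol{h}\cdot\boldsymbol{\eta}\sqrt{a}dy|\leq C\bigl(\sum_i\|h^i\|_2\bigr)\|\boldsymbol{\eta}\|_{\mathbb{X}_0}$, and combining the classical surface Korn-type inequality for the linear tensors $(\gamma_{\alpha\beta},\rho_{\alpha\beta})$ with the identity $\gamma_{\alpha\beta}=G^{BS}_{\alpha\beta}-\tfrac{1}{2}\varphi_\alpha\varphi_\beta$ gives a bridge from the quadratic energy back to $\|\boldsymbol{\eta}\|_{\mathbb{X}_0}$; the $\boldsymbol{h}$-work is absorbed whenever $\sum_i\|h^i\|_2$ is small enough.

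The hard part will be this last absorption. Since $G^{BS}_{\alpha\beta}$ is nonlinear in $\boldsymbol{\eta}$, the linear surface Korn inequality does not transfer verbatim, and the $\varphi\otimes\varphi$ cross-terms force a careful treatment distinguishing the small-$\|\boldsymbol{\eta}\|_{\mathbb{X}_0}$ regime (where $G^{BS}\approx\gamma$ and standard Korn controls $\boldsymbol{\eta}$) from the large-$\|\boldsymbol{\eta}\|_{\mathbb{X}_0}$ regime (where the quartic growth of $\|G^{BS}\|_2^2$ dominates the merely linear work term). Once coercivity is established, any minimizing sequence of $J_{BS}$ is bounded in the reflexive Hilbert space $\mathbb{X}_0$, admits a weakly convergent subsequence, and weak lower semicontinuity identifies the weak limit as a minimizer, completing the proof.
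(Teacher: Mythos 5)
Your route is in substance the paper's own: the integration by parts made possible by the structure of $\mathcal{A}$, which re-expresses the work of $\overline{\boldsymbol{f}}$ through $\varphi_\alpha(\boldsymbol{\eta})$; control of the $\varphi_\alpha$ by the $\rho^{BS}_{\alpha\beta}$; the Destuynder norm equivalence to return to $\|\boldsymbol{\eta}\|_{\mathbb{X}_0}$; absorption of the $\boldsymbol{h}$-work for $\|h^i\|_2$ small; and the direct method (the paper organizes these estimates as boundedness of a minimizing sequence satisfying $J_{BS}(\boldsymbol{\eta}^n)\le 0$ rather than as outright coercivity, but that is only a difference of presentation). The genuine gap is in your justification of the pivotal bound $\sum_\alpha\|\varphi_\alpha(\boldsymbol{\eta})\|_{1,2}\le C\sum_{\alpha,\beta}\|\rho^{BS}_{\alpha\beta}(\boldsymbol{\eta})\|_2$. ``Poincar\'e plus the definition of $\rho^{BS}_{\alpha\beta}$'' is not enough: since $\rho^{BS}_{\alpha\beta}(\boldsymbol{\eta})=\tfrac{1}{2}(\partial_\alpha\varphi_\beta+\partial_\beta\varphi_\alpha)-\Gamma^\tau_{\alpha\beta}\varphi_\tau$, the zero-trace Korn inequality only yields $\sum_\alpha\|\varphi_\alpha\|_{1,2}\le C\bigl(\sum_{\alpha,\beta}\|\rho^{BS}_{\alpha\beta}(\boldsymbol{\eta})\|_2+\sum_\alpha\|\varphi_\alpha\|_2\bigr)$ with a constant involving $\max_{\alpha,\beta,\tau}\|\Gamma^\tau_{\alpha\beta}\|_\infty$, and Poincar\'e can absorb the last term only when the Christoffel symbols are small relative to the Poincar\'e constant. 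That is exactly the geometric restriction of Theorem \ref{s1-t1} which the present theorem is designed to remove; the paper eliminates the lower-order term by invoking the covariant Korn inequality of Chen--Jost (Lemma \ref{s4-l1}), applied to $(\varphi_1,\varphi_2)\in(W^{1,2}_0(\omega))^2$ together with the identity $\rho^{BS}_{\alpha\beta}(\boldsymbol{\eta})=\tfrac12\bigl((\varphi_\alpha(\boldsymbol{\eta}))_{|\beta}+(\varphi_\beta(\boldsymbol{\eta}))_{|\alpha}\bigr)$. Without this ingredient (or an equivalent compactness/rigidity argument removing the zeroth-order term), your scheme proves existence only for shells with small Christoffel symbols, i.e., nothing beyond Theorem \ref{s1-t1}.

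Two further points. First, your integration-by-parts identity carries the remainder $-\int_\omega g\,\eta_3(\partial_1\sqrt{a}+\partial_2\sqrt{a})\,dy$, but your Cauchy--Schwarz/Young step addresses only the $\varphi_\alpha$ part; since $\|\eta_3\|_2$ is reached only through the norm equivalence and $\gamma_{\alpha\beta}=G^{BS}_{\alpha\beta}-\tfrac12\varphi_\alpha\varphi_\beta$, this remainder generates a contribution of order $\|g\|_2\sum_{\alpha,\beta}\|\rho^{BS}_{\alpha\beta}(\boldsymbol{\eta})\|_2^2$ whose coefficient is not small when $\overline{\boldsymbol{f}}$ is large, so it cannot simply be absorbed by the flexural energy and needs a separate treatment (note that the paper's estimate \eqref{s5-e1} does not display this term at all, so you cannot lean on the paper here). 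Second, the case distinction you anticipate between small and large $\|\boldsymbol{\eta}\|_{\mathbb{X}_0}$ is unnecessary: the single inequality $\|\boldsymbol{\eta}\|_{\mathbb{X}_0}\le C'\bigl(\sum_{\alpha,\beta}(\|G^{BS}_{\alpha\beta}(\boldsymbol{\eta})\|_2+\|\rho^{BS}_{\alpha\beta}(\boldsymbol{\eta})\|_2)+\sum_{\alpha,\beta}\|\rho^{BS}_{\alpha\beta}(\boldsymbol{\eta})\|_2^2\bigr)$, as in \eqref{s5-e6}, used with $\max_i\|h^i\|_2$ small, handles both regimes at once.
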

	
	The following lemma plays a crucial role in the proofs of our main results.
	\begin{lemma}\label{s4-l1}
		Under the same assumptions on $S = \boldsymbol{\theta}(\overline{\omega})$ as in the statement of Theorem \ref{s4-t1}, there exists a positive constant $C_S = C_S (\omega, \boldsymbol{\theta})$ such that the following inequality holds for every $(v_1, v_2) \in (W^{1,2}_0 (\omega))^2 $
		\begin{equation}\label{s4-e1}
			\sum\limits_{\alpha}\|v_\alpha\|_{1,2}  \leq C_{S}\sum\limits_{\alpha,\beta} \|v_{\alpha | \beta} + v_{\beta | \alpha}\|_2,
		\end{equation}
		where 
		$$
		v_{\alpha|\beta} :=  \partial_\beta v_\alpha - \Gamma^\sigma_{\alpha\beta}v_\sigma.
		$$
	\end{lemma}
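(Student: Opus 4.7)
The plan is to argue by contradiction and compactness, in the same spirit as the classical derivation of Korn-type inequalities on a surface. Assume \eqref{s4-e1} fails; then there exists a sequence $\{(v_1^n, v_2^n)\} \subset (W^{1,2}_0(\omega))^2$ satisfying
$$\sum_\alpha \|v_\alpha^n\|_{1,2} = 1 \quad \text{and} \quad \sum_{\alpha,\beta} \|v_{\alpha|\beta}^n + v_{\beta|\alpha}^n\|_2 \longrightarrow 0.$$
By reflexivity of $W^{1,2}_0(\omega)$ and the Rellich--Kondrachov compact embedding, a subsequence (not relabelled) satisfies $v_\alpha^n \rightharpoonup v_\alpha$ weakly in $W^{1,2}_0(\omega)$ and $v_\alpha^n \to v_\alpha$ strongly in $L^2(\omega)$.

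The first key step is to upgrade this to strong convergence in $(W^{1,2}_0(\omega))^2$. To do so I would invoke the classical two-dimensional Korn inequality in $(W^{1,2}_0(\omega))^2$, namely $\|\boldsymbol{u}\|_{1,2} \le C_K\|\mathrm{sym}\nabla \boldsymbol{u}\|_2$, applied to the differences $\boldsymbol{v}^n - \boldsymbol{v}^m$. Combining it with the pointwise identity
$$\partial_\alpha v_\beta + \partial_\beta v_\alpha = (v_{\alpha|\beta} + v_{\beta|\alpha}) + 2\,\Gamma^\sigma_{\alpha\beta}\, v_\sigma,$$
and using that $\Gamma^\sigma_{\alpha\beta} \in \mathcal{C}^0(\overline{\omega}) \subset L^\infty(\omega)$, one sees that $\{\boldsymbol{v}^n\}$ is Cauchy in $(W^{1,2}_0(\omega))^2$: the first piece of the right-hand side is Cauchy in $L^2$ by hypothesis, while the second is Cauchy in $L^2$ because $\{\boldsymbol{v}^n\}$ is already Cauchy in $(L^2(\omega))^2$. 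Hence $v_\alpha^n \to v_\alpha$ strongly, so $\sum_\alpha \|v_\alpha\|_{1,2} = 1$, and passing to the limit yields $v_{\alpha|\beta} + v_{\beta|\alpha} = 0$ in $L^2(\omega)$. Equivalently, the purely tangential displacement $\boldsymbol{v} := v_\alpha \boldsymbol{a}^\alpha$ (with third covariant component equal to $0$) satisfies $\gamma_{\alpha\beta}(\boldsymbol{v}) = 0$ in $\omega$.

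The decisive and most delicate step is an infinitesimal rigid displacement argument restricted to \emph{tangential} displacements: one must conclude that any $\boldsymbol{v} \in (W^{1,2}_0(\omega))^2$ with $\gamma_{\alpha\beta}(\boldsymbol{v}) = 0$ and $v_\alpha|_\gamma = 0$ vanishes identically. This is where I expect the main technical obstacle to lie, since the standard rigid displacement lemma on a surface (see e.g.\ Ciarlet) requires both $\gamma_{\alpha\beta}(\cdot) = 0$ \emph{and} $\rho_{\alpha\beta}(\cdot) = 0$, whereas here we only have $\gamma$. Nevertheless, the field $\boldsymbol{v}$ is a tangential Killing field of the induced Riemannian metric $(a_{\alpha\beta})$ on $S$, and such fields are smooth by elliptic regularity and determined by their $1$-jet at any point. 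Writing the Killing equations $\partial_\alpha v_\beta + \partial_\beta v_\alpha = 2\Gamma^\sigma_{\alpha\beta} v_\sigma$ in a local tangent-normal frame along a smooth arc of $\gamma$, the condition $v_\alpha = 0$ on $\gamma$ forces both the tangential and the normal first derivatives of $\boldsymbol{v}$ to vanish on $\gamma$, so the full $1$-jet of $\boldsymbol{v}$ vanishes there, whence $\boldsymbol{v} \equiv 0$ by uniqueness for the Killing system. This contradicts $\sum_\alpha \|v_\alpha\|_{1,2} = 1$ and completes the proof of \eqref{s4-e1}.
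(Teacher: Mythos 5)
Your compactness reduction is fine as far as it goes: the renormalized contradiction sequence, the use of the flat first Korn inequality on $(W^{1,2}_0(\omega))^2$ together with the identity $\partial_\alpha v_\beta+\partial_\beta v_\alpha=(v_{\alpha|\beta}+v_{\beta|\alpha})+2\Gamma^\sigma_{\alpha\beta}v_\sigma$ and Rellich to get strong $W^{1,2}$ convergence, and the conclusion that the limit $\boldsymbol{v}=v_\alpha\boldsymbol{a}^\alpha$ is a nonzero field in $(W^{1,2}_0(\omega))^2$ with $v_{\alpha|\beta}+v_{\beta|\alpha}=0$ are all correct. The genuine gap is exactly the step you yourself flag as decisive: the rigidity statement that a tangential Killing field of $(a_{\alpha\beta})$ with zero trace must vanish. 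As written, that step does not go through under the paper's hypotheses. First, $\gamma$ is only Lipschitz, so there need not be any smooth arc of $\gamma$, and, more importantly, $v_\alpha=0$ on $\gamma$ holds only in the trace sense: to speak of tangential and normal first derivatives of $\boldsymbol{v}$ \emph{on} $\gamma$ you would need regularity of $\boldsymbol{v}$ up to the boundary, which interior elliptic regularity does not provide on a Lipschitz domain. Second, the ``determined by the $1$-jet / uniqueness for the Killing system'' argument rests on the prolonged equation $\nabla_\alpha\nabla_\beta v_\sigma=-R_{\beta\sigma\alpha}{}^{\tau}v_\tau$, i.e.\ on a continuous curvature tensor, hence on $a_{\alpha\beta}$ having two derivatives; here $\boldsymbol{\theta}\in\mathcal{C}^2(\overline{\omega};\mathbb{R}^3)$ gives only $a_{\alpha\beta}\in\mathcal{C}^1(\overline{\omega})$ and $\Gamma^\sigma_{\alpha\beta}\in\mathcal{C}^0(\overline{\omega})$, so even the interior smoothness and jet-determination of Killing fields that you invoke is not justified at this regularity. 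Since the whole contradiction argument hinges on this kernel being trivial, the proof is incomplete: the unproved step carries essentially the full analytic weight of the lemma.

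Note also that your route is genuinely different from the paper's, which gives no self-contained argument at all but derives the inequality as a special case of the Riemannian Korn inequality of Chen and Jost (Theorem 5.5 in \cite{chen}, taking $g_{ij}=\delta_{ij}$); that reference is precisely where the compactness/rigidity content you are trying to reprove is established in a form valid for $W^{1,2}_0$ fields. So the cleanest repair is either to cite such a result (as the paper does, or another published Korn-type inequality on a surface for fields vanishing on the whole boundary), or, if you want a self-contained proof, to supply the missing rigidity argument under $\mathcal{C}^1$ metric regularity and Lipschitz boundary --- for instance by proving boundary continuity of Killing fields and using that the zero set of a nontrivial Killing field on a surface consists of isolated points --- none of which is immediate from what you wrote.
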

	\begin{proof}
		This lemma is a direct consequence of \cite[Theorem 5.5]{chen} by letting $g_{ij} = \delta_{ij}$, where $\delta_{ij}$ is the Kronecker symbol, and for this reason, the proof will be omitted here.
	\end{proof}
	
	Now we are able to give the proof of Theorem \ref{s4-t1}.
	
	\begin{proof}[Proof of Theorem \ref{s4-t1}]
		For readers' convenience, we will divide the proof into two steps. We denote $\{\boldsymbol{\eta}^n \}_{n=1}^\infty$ the minimizing sequence for the functional $J_{BS}$ in the space $\mathbb{X}_0$. Without loss of generality, we can assume that
		\begin{equation}\label{s5-e0}
			J_{BS} (\boldsymbol{\eta}^n) \leq J_{BS}(\boldsymbol{0}) = 0 \textrm{ for all } n.
		\end{equation}
		
		\noindent \textbf{Step 1:} We will prove that the sequence $\{ \sum\limits_{\alpha,\beta}\|\rho^{BS}_{\alpha\beta}(\boldsymbol{\eta}^n)\|_2\}_{n=1}^\infty$ is bounded. From the expressions of $J_{BS}$ and $f^i$ and \eqref{s2-e1}, the following inequality holds for any $\boldsymbol{\eta} \in \mathbb{X}_0$:
		\begin{align}\label{s5-e1}
			\nonumber
			J_{BS}(\boldsymbol{\eta}) \geq & \dfrac{c_e \varepsilon^3}{6} \left(\sum\limits_{\alpha,\beta}\|G^{BS}_{\alpha\beta}(\boldsymbol{\eta})\|^2_2 + \sum\limits_{\alpha,\beta}\|\rho^{BS}_{\alpha\beta}(\boldsymbol{\eta})\|^2_2\right) \\
			& - \|g\|_2 (\|\varphi_1(\boldsymbol{\eta}) \|_2 + \|\varphi_2(\boldsymbol{\eta}) \|_2) - Cl \|\boldsymbol{\eta}\|_{\mathbb{X}_0},
		\end{align}
		where $C$ is a positive constant, and
		$$
		l = \max\limits_{i = 1,2,3} \|h^i\|_2,
		$$
		and $g$ is a function in $W^{1,2}(\omega)$ such that
		$$
		\overline{f}^1 = g(b^1_1 + b^1_2), \textrm{ } \overline{f}^2 = g(b^2_1 + b^2_2),\textrm{ and } \overline{f}^3 = \partial_1 g + \partial_2 g.
		$$
		
		From Lemma \ref{s4-l1}, we deduce that
		\begin{align}\label{s5-e2}
			\nonumber
			J_{BS}(\boldsymbol{\eta}) \geq & \dfrac{c_e \varepsilon^3}{6}\left(\sum\limits_{\alpha,\beta}\|G^{BS}_{\alpha\beta}(\boldsymbol{\eta})\|^2_2 + \sum\limits_{\alpha,\beta}\|\rho^{BS}_{\alpha\beta}(\boldsymbol{\eta})\|^2_2\right) \\
			& - C_S\|g\|_2 (\sum\limits_{\alpha,\beta}\|\rho^{BS}_{\alpha\beta}(\boldsymbol{\eta})\|_2) - Cl \|\boldsymbol{\eta}\|_{\mathbb{X}_0},
		\end{align}
		with notice that 
		$$
		\rho^{BS}_{\alpha\beta}(\boldsymbol{\eta}) = \dfrac{1}{2}\left( (\varphi_\alpha (\boldsymbol{\eta}))_{|\beta} + (\varphi_\beta (\boldsymbol{\eta}))_{|\alpha} \right).
		$$
		Here $C_S$ denotes the positive constant in \eqref{s4-e1}.
		
		Next, assuming that the sequence $\{ \sum\limits_{\alpha,\beta}\|\rho^{BS}_{\alpha\beta}(\boldsymbol{\eta}^n)\|_2\}_{n=1}^\infty$ is unbounded. Then, there exists $N>0$ such that for every $n > N$, the following inequality holds
		\begin{equation}\label{s5-e3}
			\dfrac{c_e \varepsilon^3}{6}\left( \sum\limits_{\alpha,\beta}\|\rho^{BS}_{\alpha\beta}(\boldsymbol{\eta}^n)\|^2_2\right) - C_S\|g\|_2 (\sum\limits_{\alpha,\beta}\|\rho^{BS}_{\alpha\beta}(\boldsymbol{\eta}^n)\|_2) \geq \dfrac{c_e \varepsilon^3}{12}\left( \sum\limits_{\alpha,\beta}\|\rho^{BS}_{\alpha\beta}(\boldsymbol{\eta}^n)\|^2_2\right).
		\end{equation}
		Thus it follows from \eqref{s5-e2} and \eqref{s5-e3} that
		\begin{align*}
			J_{BS}(\boldsymbol{\eta}^n) \geq & \dfrac{c_e \varepsilon^3}{6} \left(\sum\limits_{\alpha,\beta}\|G^{BS}_{\alpha\beta}(\boldsymbol{\eta}^n)\|^2_2 + \dfrac{1}{2}\sum\limits_{\alpha,\beta}\|\rho^{BS}_{\alpha\beta}(\boldsymbol{\eta}^n)\|^2_2\right)  - Cl \|\boldsymbol{\eta}^n\|_{\mathbb{X}_0}.
		\end{align*}
		From this and the fact that $J_{BS}(\boldsymbol{\eta}^n) \leq 0$, we deduce that for every $n > N$, the following holds
		\begin{equation}\label{s5-e4}
			\dfrac{c_e \varepsilon^3}{6} \left(\sum\limits_{\alpha,\beta}\|G^{BS}_{\alpha\beta}(\boldsymbol{\eta}^n)\|^2_2 + \dfrac{1}{2}\sum\limits_{\alpha,\beta}\|\rho^{BS}_{\alpha\beta}(\boldsymbol{\eta}^n)\|^2_2\right)  \leq Cl \|\boldsymbol{\eta}^n\|_{\mathbb{X}_0}.
		\end{equation}
		
		By using the result in \cite[p.75]{Destuyn1} showing that the mapping
		$$
		(\zeta_1, \zeta_2, \zeta_3) \in (W^{1,2}_0(\omega))^2 \times W^{2,2}_0 (\omega) \to \sum\limits_{\alpha,\beta} \left(\| \gamma_{\alpha\beta} (\boldsymbol{\zeta}) \|_2 +\|\rho^{BS}_{\alpha\beta} (\boldsymbol{\zeta}) \|_2\right) ,
		$$
		where $\boldsymbol{\zeta} = \zeta^i \boldsymbol{a}_i$, is a norm on the space $(W^{1,2}_0(\omega))^2 \times W^{2,2}_0 (\omega)$ equivalent to the canonical one, together with the Poincaré inequality and the Sobolev embeddings, we deduce that
		\begin{align}\label{s5-e5}
			\nonumber
			\|\boldsymbol{\eta}^n\|_{\mathbb{X}_0} &\leq C' \sum\limits_{\alpha,\beta}\left( \| \gamma_{\alpha\beta} (\boldsymbol{\eta}^n) \|_2 +\|\rho^{BS}_{\alpha\beta} (\boldsymbol{\eta}^n)\|_2\right) \\
			\nonumber
			& \leq C' \sum\limits_{\alpha,\beta}\left( \| G^{BS}_{\alpha\beta} (\boldsymbol{\eta}^n) \|_2 + \|\varphi_\alpha(\boldsymbol{\eta}^n)\varphi_\beta(\boldsymbol{\eta}^n)\|_2 + \|\rho^{BS}_{\alpha\beta} (\boldsymbol{\eta}^n)\|_2\right) \\
			& \leq C' \left(\sum\limits_{\alpha,\beta}\left( \| G^{BS}_{\alpha\beta} (\boldsymbol{\eta}^n) \|_2 + \|\rho^{BS}_{\alpha\beta} (\boldsymbol{\eta}^n)\|_2\right) + \sum\limits_{\alpha}\| \varphi_\alpha (\boldsymbol{\eta}^n)\|^2_4\right).
		\end{align}
		
		Again from Lemma \ref{s4-l1}, it follows from \eqref{s5-e5} that
		\begin{align}\label{s5-e6}
			\nonumber
			\|\boldsymbol{\eta}^n\|_{\mathbb{X}_0} &\leq C' \left(\sum\limits_{\alpha,\beta}\left( \| G^{BS}_{\alpha\beta} (\boldsymbol{\eta}^n) \|_2 + \|\rho^{BS}_{\alpha\beta} (\boldsymbol{\eta}^n)\|_2\right) + \sum\limits_{\alpha}\| \varphi_\alpha (\boldsymbol{\eta}^n)\|^2_{1,2}\right) \\
			& \leq C' \left(\sum\limits_{\alpha,\beta}\left( \| G^{BS}_{\alpha\beta} (\boldsymbol{\eta}^n) \|_2 + \|\rho^{BS}_{\alpha\beta} (\boldsymbol{\eta}^n)\|_2\right) + \sum\limits_{\alpha,\beta}\|\rho^{BS}_{\alpha\beta} (\boldsymbol{\eta}^n)\|_2^2\right).
		\end{align}
		Hence provided that
		$$
		l < \dfrac{c_e \varepsilon^3}{12CC'},
		$$
		by combining \eqref{s5-e4} and \eqref{s5-e6} and let $n\to\infty$, we have a contradiction. Thus, for $l$ small enough, the sequence $\{ \sum\limits_{\alpha,\beta}\|\rho^{BS}_{\alpha\beta}(\boldsymbol{\eta}^n)\|_2\}_{n=1}^\infty$ is bounded. 
		
		\noindent\textbf{Step 2:} From Step 1, Lemma \ref{s4-l1}, the Poincar\'e inequality, and the embedding $W^{1,2}(\omega) \hookrightarrow L^4(\omega)$, it is easy to see that the sequence $\{ \sum\limits_{\alpha}\|\varphi_\alpha (\boldsymbol{\eta}^n) \|_4\}_{n=1}^\infty$ is bounded. Again, from the expressions of $J_{BS}$ and \eqref{s2-e1}, the following inequality holds for any $n$:
		\begin{align}\label{s5-e7}
			\nonumber
			0 \geq J_{BS}(\boldsymbol{\eta}^n) \geq & \dfrac{c_e \varepsilon^3}{6} \left(\sum\limits_{\alpha,\beta}\|\gamma_{\alpha\beta}(\boldsymbol{\eta}^n)\|^2_2 + \sum\limits_{\alpha,\beta}\|\rho^{BS}_{\alpha\beta}(\boldsymbol{\eta}^n)\|^2_2\right) \\
			& - C \sum\limits_{\alpha}\|\varphi_\alpha (\boldsymbol{\eta}^n) \|_4^4  - C \left(\sum\limits_{i}\|f^i\|_2 \right)\|\boldsymbol{\eta}^n\|_{\mathbb{X}_0}.
		\end{align}
		From this, the boundedness of the sequence $\{ \sum\limits_{\alpha}\|\varphi_\alpha (\boldsymbol{\eta}^n) \|_4\}_{n=1}^\infty$ and the equivalence between the canonical norm in $\mathbb{X}_0$ and the one given by 
		$$
		\sum\limits_{\alpha,\beta} \left(\| \gamma_{\alpha\beta} (\cdot) \|_2 +\|\rho^{BS}_{\alpha\beta} (\cdot) \|_2\right),
		$$
		we easily deduce that the sequence $\{\boldsymbol{\eta}^n\}_{n=1}^\infty$ is bounded in $\mathbb{X}_0$. Now, by combining with the fact that the functional $J_{BS}$ is weakly lower semicontinuous, our desired result follows. Our proof is complete.
		
	\end{proof}
	
	In the following theorem, we will study the uniqueness of the minimizer found in Theorem \ref{s4-t1}. 
	
	\begin{theorem}\label{s4-t2}
		Provided the applied force is small enough, the minimizer found in Theorem \eqref{s4-t1} is unique.
	\end{theorem}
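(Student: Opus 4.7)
The plan is to proceed via the Euler--Lagrange equation together with a quantitative coercivity estimate that exploits the smallness of minimizers under small forces. First, I would revisit the proof of Theorem \ref{s4-t1} in order to extract a linear a priori bound of the form $\|\boldsymbol{\eta}\|_{\mathbb{X}_0}\le C\|\boldsymbol{f}\|_2$ for every minimizer $\boldsymbol{\eta}$. Indeed, since any minimizer satisfies $J_{BS}(\boldsymbol{\eta})\le J_{BS}(\boldsymbol{0})=0$, the inequality \eqref{s5-e2} and Lemma \ref{s4-l1} give
$$
\frac{c_e\varepsilon^3}{6}\left(\sum_{\alpha,\beta}\|G^{BS}_{\alpha\beta}(\boldsymbol{\eta})\|_2^2 + \sum_{\alpha,\beta}\|\rho^{BS}_{\alpha\beta}(\boldsymbol{\eta})\|_2^2\right) \le C\|\boldsymbol{f}\|_2\,\|\boldsymbol{\eta}\|_{\mathbb{X}_0},
$$
while the estimate \eqref{s5-e6} yields $\|\boldsymbol{\eta}\|_{\mathbb{X}_0}\le C'\big(\sum\|G^{BS}_{\alpha\beta}(\boldsymbol{\eta})\|_2+\sum\|\rho^{BS}_{\alpha\beta}(\boldsymbol{\eta})\|_2+\sum\|\rho^{BS}_{\alpha\beta}(\boldsymbol{\eta})\|_2^2\big)$. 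For small enough $\|\boldsymbol{f}\|_2$ the quadratic correction can be absorbed, producing the desired linear bound.

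Second, I would write down the Euler--Lagrange equation: every minimizer satisfies
$$
\varepsilon\!\int_\omega\! a^{\alpha\beta\sigma\tau}G^{BS}_{\alpha\beta}(\boldsymbol{\eta})\,DG^{BS}_{\sigma\tau}(\boldsymbol{\eta})[\boldsymbol{\zeta}]\sqrt{a}\,dy+\frac{\varepsilon^3}{3}\!\int_\omega\! a^{\alpha\beta\sigma\tau}\rho^{BS}_{\alpha\beta}(\boldsymbol{\eta})\rho^{BS}_{\sigma\tau}(\boldsymbol{\zeta})\sqrt{a}\,dy=\int_\omega\boldsymbol{f}\cdot\boldsymbol{\zeta}\sqrt{a}\,dy
$$
for every $\boldsymbol{\zeta}\in\mathbb{X}_0$, where $DG^{BS}_{\sigma\tau}(\boldsymbol{\eta})[\boldsymbol{\zeta}]=\gamma_{\sigma\tau}(\boldsymbol{\zeta})+\tfrac{1}{2}(\varphi_\sigma(\boldsymbol{\eta})\varphi_\tau(\boldsymbol{\zeta})+\varphi_\sigma(\boldsymbol{\zeta})\varphi_\tau(\boldsymbol{\eta}))$. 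For two minimizers $\boldsymbol{\eta}_1,\boldsymbol{\eta}_2$, subtracting the two Euler--Lagrange equations, testing against $\boldsymbol{\zeta}:=\boldsymbol{\eta}_1-\boldsymbol{\eta}_2$ (which annihilates the force term) and applying the fundamental theorem of calculus to $DJ_{BS}$ produces the identity
$$
0=\int_0^1 D^2J_{BS}(\boldsymbol{\eta}_2+t\boldsymbol{\zeta})[\boldsymbol{\zeta},\boldsymbol{\zeta}]\,dt.
$$

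The central step is to show positive definiteness of this integrand. The quadratic form $D^2J_{BS}(\boldsymbol{\xi})[\boldsymbol{\zeta},\boldsymbol{\zeta}]$ splits naturally into the linear Budiansky--Sanders bilinear form $\varepsilon\!\int a^{\alpha\beta\sigma\tau}\gamma_{\alpha\beta}(\boldsymbol{\zeta})\gamma_{\sigma\tau}(\boldsymbol{\zeta})\sqrt{a}\,dy+\tfrac{\varepsilon^3}{3}\!\int a^{\alpha\beta\sigma\tau}\rho^{BS}_{\alpha\beta}(\boldsymbol{\zeta})\rho^{BS}_{\sigma\tau}(\boldsymbol{\zeta})\sqrt{a}\,dy$ plus perturbation terms, each carrying at least one factor of $\varphi_\alpha(\boldsymbol{\xi})$ or $G^{BS}_{\alpha\beta}(\boldsymbol{\xi})$. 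By \eqref{s2-e2}, Lemma \ref{s4-l1}, the equivalence of norms on $\mathbb{X}_0$ recalled inside the proof of Theorem \ref{s4-t1}, and the continuous embedding $W^{1,2}(\omega)\hookrightarrow L^4(\omega)$, the principal part is bounded below by $c\|\boldsymbol{\zeta}\|_{\mathbb{X}_0}^2$, while the perturbations are controlled by $C\|\boldsymbol{\xi}\|_{\mathbb{X}_0}\|\boldsymbol{\zeta}\|_{\mathbb{X}_0}^2$. Since $\|\boldsymbol{\eta}_2+t\boldsymbol{\zeta}\|_{\mathbb{X}_0}\le\|\boldsymbol{\eta}_1\|_{\mathbb{X}_0}+\|\boldsymbol{\eta}_2\|_{\mathbb{X}_0}\le 2C\|\boldsymbol{f}\|_2$ by Step 1, the identity above yields $(c-C''\|\boldsymbol{f}\|_2)\|\boldsymbol{\zeta}\|_{\mathbb{X}_0}^2\le 0$, and smallness of $\boldsymbol{f}$ forces $\boldsymbol{\zeta}=\boldsymbol{0}$.

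The main obstacle will be the careful bookkeeping of the cubic and quartic cross-terms generated by the successive differentiation of $G^{BS}_{\alpha\beta}(\boldsymbol{\eta})G^{BS}_{\sigma\tau}(\boldsymbol{\eta})$: one has to check that every term containing a factor drawn from $\boldsymbol{\xi}=\boldsymbol{\eta}_2+t\boldsymbol{\zeta}$ can be bounded by $\|\boldsymbol{\xi}\|_{\mathbb{X}_0}\|\boldsymbol{\zeta}\|_{\mathbb{X}_0}^2$ via the $W^{1,2}\hookrightarrow L^4$ embedding, so that the perturbation is genuinely first order in $\|\boldsymbol{f}\|_2$ and can be absorbed by the coercive principal part. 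Once this book-keeping is carried out, the closure of the argument is immediate.
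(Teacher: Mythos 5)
Your proposal is correct in substance, but it takes a genuinely different route from the paper. You argue via the Euler--Lagrange equations of two minimizers and the integrated second variation along the segment joining them: an a priori bound $\|\boldsymbol{\eta}\|_{\mathbb{X}_0}\le C\,l$ for every minimizer (your Step 1, which is essentially the paper's Step 1, i.e.\ \eqref{s4-t2-e1}--\eqref{s4-t2-e5} rephrased in the full norm), then positive definiteness of $D^2J_{BS}(\boldsymbol{\xi})$ for all $\boldsymbol{\xi}$ in the small ball, obtained by peeling off the linear Budiansky--Sanders form (coercive by \eqref{s2-e2} together with the norm equivalence of \cite[p.75]{Destuyn1}) and absorbing the cross terms, each carrying a factor $\varphi_\alpha(\boldsymbol{\xi})$ or $G^{BS}_{\alpha\beta}(\boldsymbol{\xi})$, through the $W^{1,2}\hookrightarrow L^4$ embedding and Lemma \ref{s4-l1}; this bookkeeping does go through as you anticipate, since $aDG[\boldsymbol{\zeta}]DG[\boldsymbol{\zeta}]\ge 0$ and $\|G^{BS}(\boldsymbol{\xi})\|_2\lesssim \|\boldsymbol{\xi}\|_{\mathbb{X}_0}+\|\boldsymbol{\xi}\|_{\mathbb{X}_0}^2$. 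The paper instead exploits that $J_{BS}$ is a quartic polynomial and expands it \emph{exactly} about the minimizer $\boldsymbol{\eta}_{\boldsymbol{f}}$ (identity \eqref{s4-t2-e7}): the remainder is a nonnegative quadratic form in the quantities $A_{\alpha\beta}(\boldsymbol{\eta}_{\boldsymbol{f}},\boldsymbol{\zeta})$ and $\rho^{BS}_{\alpha\beta}(\boldsymbol{\zeta})$ plus a single cross term $G^{BS}(\boldsymbol{\eta}_{\boldsymbol{f}})\varphi(\boldsymbol{\zeta})\varphi(\boldsymbol{\zeta})$, which is absorbed using only the smallness of $\sum\|G^{BS}_{\alpha\beta}(\boldsymbol{\eta}_{\boldsymbol{f}})\|_2$ from \eqref{s4-t2-e4}; equality of energies then forces $A_{\alpha\beta}=\rho^{BS}_{\alpha\beta}(\boldsymbol{\zeta})=0$, hence $\gamma_{\alpha\beta}(\boldsymbol{\zeta})=0$ and $\boldsymbol{\zeta}=\boldsymbol{0}$ by the norm equivalence. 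Comparing the two: the paper's expansion avoids any differentiability/segment bookkeeping, needs no integration in $t$, and yields the quantitative strict-minimality inequality \eqref{s4-t2-e11} for \emph{all} competitors $\boldsymbol{\eta}\in\mathbb{X}_0$ (not only critical points); your argument requires both competitors to be critical points lying in the small ball, but in exchange gives a slightly stronger by-product, namely uniqueness of critical points of $J_{BS}$ within that ball, not merely uniqueness of the energy minimizer. One small caveat: your appeal to \eqref{s5-e2} in Step 1 should be replaced by the crude estimate $|\int_\omega \boldsymbol{f}\cdot\boldsymbol{\eta}\sqrt{a}\,dy|\le C\,l\,\|\boldsymbol{\eta}\|_{\mathbb{X}_0}$ (i.e.\ the case $g=0$), since in the uniqueness theorem the whole force, not just the perturbation $h^i$, is assumed small --- exactly as the paper does when it sets $l:=\max_i\|f^i\|_2$.
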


	\begin{proof}
		Let $\boldsymbol{\eta}_{\boldsymbol{f}}$ be a minimizer of the functional $J_{BS}$ associated with the applied force $\boldsymbol{f} = f^i \boldsymbol{a}_i$. For readers' convenience, we will divide the proof into three steps.
		
		\noindent \textbf{Step 1:} It is easy to see that
		$$
		J_{BS} (\boldsymbol{\eta}_{\boldsymbol{f}}) \leq J_{BS}(\boldsymbol{0}) = 0.
		$$
		Then, by mimicking the argument in Step 1 of the proof of Theorem \ref{s4-t1}, we obtain
		\begin{align}\label{s4-t2-e1}
			\nonumber
			&\dfrac{c_e \varepsilon^3}{6}      
			\left(\sum\limits_{\alpha,\beta}\|G^{BS}_{\alpha\beta}(\boldsymbol{\eta}^n)\|^2_2 +\sum\limits_{\alpha,\beta}\|\rho^{BS}_{\alpha\beta}(\boldsymbol{\eta}^n)\|^2_2\right)  \\
			& \leq CC'l \left(\sum\limits_{\alpha,\beta}\left( \| G^{BS}_{\alpha\beta} (\boldsymbol{\eta}^n) \|_2 + \|\rho^{BS}_{\alpha\beta} (\boldsymbol{\eta}^n)\|_2\right) + \sum\limits_{\alpha,\beta}\|\rho^{BS}_{\alpha\beta} (\boldsymbol{\eta}^n)\|_2^2\right),
		\end{align}
		where 
		$$
		l := \max\limits_{i=1,2,3} \| f^i\|_2,
		$$
		$C$ and $C'$ are positive constants, and $c_e$ is the constant appearing in \eqref{s2-e2}. It follows from this inequality that
		\begin{align}\label{s4-t2-e2}
			\nonumber
			&\dfrac{c_e \varepsilon^3}{48}      
			\left(\sum\limits_{\alpha,\beta}\|G^{BS}_{\alpha\beta}(\boldsymbol{\eta}^n)\|_2 +\sum\limits_{\alpha,\beta}\|\rho^{BS}_{\alpha\beta}(\boldsymbol{\eta}^n)\|_2\right)^2 \\
			& \leq CC'l \left(\sum\limits_{\alpha,\beta}\left( \| G^{BS}_{\alpha\beta} (\boldsymbol{\eta}^n) \|_2 + \|\rho^{BS}_{\alpha\beta} (\boldsymbol{\eta}^n)\|_2\right) + \sum\limits_{\alpha,\beta}\|\rho^{BS}_{\alpha\beta} (\boldsymbol{\eta}^n)\|_2^2\right).
		\end{align}
		By letting
		$$
		l < \dfrac{c_e\varepsilon^3}{96CC'},
		$$
		we deduce from \eqref{s4-t2-e2} that
		\begin{equation}\label{s4-t2-e3}
			\dfrac{c_e \varepsilon^3}{96CC'}      
			\left(\sum\limits_{\alpha,\beta}\|G^{BS}_{\alpha\beta}(\boldsymbol{\eta}^n)\|_2 +\sum\limits_{\alpha,\beta}\|\rho^{BS}_{\alpha\beta}(\boldsymbol{\eta}^n)\|_2\right) \leq l.
		\end{equation}
		This implies that
		\begin{equation}\label{s4-t2-e4}
			\sum\limits_{\alpha,\beta}\|G^{BS}_{\alpha\beta}(\boldsymbol{\eta}^n)\|_2 \leq Ml,
		\end{equation}
		and
		\begin{equation}\label{s4-t2-e5}
			\sum\limits_{\alpha,\beta}\|\rho^{BS}_{\alpha\beta}(\boldsymbol{\eta}^n)\|_2 \leq Ml,
		\end{equation}
		where $M= \frac{96CC'}{c_e \varepsilon^3}$.
		
		\noindent \textbf{Step 2:} Let $\boldsymbol{\eta}$ be any element in $\mathbb{X}_0$ and let $\boldsymbol{\zeta} := \boldsymbol{\eta} - \boldsymbol{\eta}_{\boldsymbol{f}}$. It is obvious that $\boldsymbol{\zeta} \in \mathbb{X}_0$. Then, a series of straightforward computations show that (the detailed computations will be given in the Appendix)
		\begin{align}\label{s4-t2-e6}
			\nonumber
			J_{BS}(\boldsymbol{\eta}) = &J_{BS}(\boldsymbol{\eta}_{\boldsymbol{f}}) + \textrm{d}J_{BS}(\boldsymbol{\eta}_{\boldsymbol{f}})(\boldsymbol{\zeta}) \\
			\nonumber
			&  +\int\limits_\omega \dfrac{\varepsilon}{2}a^{\alpha\beta\sigma\tau}A_{\alpha\beta}(\boldsymbol{\eta}_{\boldsymbol{f}}, \boldsymbol{\zeta})A_{\sigma\tau}(\boldsymbol{\eta}_{\boldsymbol{f}}, \boldsymbol{\zeta})\sqrt{a}dy \\
			\nonumber
			& + \dfrac{\varepsilon}{2} \sum\limits_{\alpha,\beta,\sigma,\tau}\int\limits_\omega a^{\alpha\beta\sigma\tau}G^{BS}_{\alpha\beta}(\boldsymbol{\eta}_{\boldsymbol{f}})\varphi_{\sigma}(\boldsymbol{\zeta})\varphi_{\tau}(\boldsymbol{\zeta})\sqrt{a}dy \\
			& + \int\limits_\omega \dfrac{\varepsilon^3}{6}a^{\alpha\beta\sigma\tau}\rho^{BS}_{\alpha\beta}(\boldsymbol{\zeta})\rho^{BS}_{\sigma\tau}(\boldsymbol{\zeta})\sqrt{a}dy,
		\end{align}
		where 
		$$\aligned
		A_{\alpha\beta}(\boldsymbol{\eta}_{\boldsymbol{f}}, \boldsymbol{\zeta}) =& \gamma_{\alpha\beta}(\boldsymbol{\zeta}) + \dfrac{1}{2}\left( \varphi_\alpha (\boldsymbol{\eta}_{\boldsymbol{f}})\varphi_\beta (\boldsymbol{\zeta}) + \varphi_\alpha (\boldsymbol{\zeta})\varphi_\beta (\boldsymbol{\eta}_{\boldsymbol{f}})\right) \\
		& + \dfrac{1}{2} \varphi_\alpha (\boldsymbol{\zeta})\varphi_\beta (\boldsymbol{\zeta}).
		\endaligned$$
		
		Since $\boldsymbol{\eta}_{\boldsymbol{f}}$ is a critical point of $J_{BS}$, it follows that
		\begin{align}\label{s4-t2-e7}
			\nonumber
			J_{BS}(\boldsymbol{\eta}) = &J_{BS}(\boldsymbol{\eta}_{\boldsymbol{f}}) +\int\limits_\omega \dfrac{\varepsilon}{2}a^{\alpha\beta\sigma\tau}A_{\alpha\beta}(\boldsymbol{\eta}_{\boldsymbol{f}}, \boldsymbol{\zeta})A_{\sigma\tau}(\boldsymbol{\eta}_{\boldsymbol{f}}, \boldsymbol{\zeta})\sqrt{a}dy \\
			\nonumber
			& + \dfrac{\varepsilon}{2} \sum\limits_{\alpha,\beta,\sigma,\tau}\int\limits_\omega a^{\alpha\beta\sigma\tau}G^{BS}_{\alpha\beta}(\boldsymbol{\eta}_{\boldsymbol{f}})\varphi_{\sigma}(\boldsymbol{\zeta})\varphi_{\tau}(\boldsymbol{\zeta})\sqrt{a}dy \\
			& + \int\limits_\omega \dfrac{\varepsilon^3}{6}a^{\alpha\beta\sigma\tau}\rho^{BS}_{\alpha\beta}(\boldsymbol{\zeta})\rho^{BS}_{\sigma\tau}(\boldsymbol{\zeta})\sqrt{a}dy.
		\end{align}
		
		\noindent \textbf{Step 3:} On one hand, it follows from \eqref{s2-e2} that
		\begin{align}\label{s4-t2-e8}
			\nonumber
			&\int\limits_\omega \dfrac{\varepsilon}{2}a^{\alpha\beta\sigma\tau}A_{\alpha\beta}(\boldsymbol{\eta}_{\boldsymbol{f}}, \boldsymbol{\zeta})A_{\sigma\tau}(\boldsymbol{\eta}_{\boldsymbol{f}}, \boldsymbol{\zeta})\sqrt{a}dy + \int\limits_\omega \dfrac{\varepsilon^3}{6}a^{\alpha\beta\sigma\tau}\rho^{BS}_{\alpha\beta}(\boldsymbol{\zeta})\rho^{BS}_{\sigma\tau}(\boldsymbol{\zeta})\sqrt{a}dy \\
			&\geq \dfrac{c_e\varepsilon^3}{6} \bigg( \sum\limits_{\alpha,\beta} \| A_{\alpha\beta}(\boldsymbol{\eta}_{\boldsymbol{f}},\boldsymbol{\zeta})\|_2^2 + \sum\limits_{\alpha,\beta}\|\rho^{BS}_{\alpha\beta}(\boldsymbol{\zeta})\|_2^2\bigg).
		\end{align}
		On the other hand, thanks to \eqref{s4-t2-e4} and the H\"older inequality, we deduce that
		\begin{align}\label{s4-t2-e9}
			\nonumber
			&  \dfrac{\varepsilon}{2} \sum\limits_{\alpha,\beta,\sigma,\tau}\int\limits_\omega a^{\alpha\beta\sigma\tau}G^{BS}_{\alpha\beta}(\boldsymbol{\eta}_{\boldsymbol{f}})\varphi_{\sigma}(\boldsymbol{\zeta})\varphi_{\tau}(\boldsymbol{\zeta})\sqrt{a}dy \\
			&\geq -C_1Ml \sum\limits_{\alpha} \|\varphi_\alpha (\boldsymbol{\zeta})\|_4^2,
		\end{align}
		where $C_1$ is a positive constant depending only on $\boldsymbol{\theta}$, $\lambda$ and $\mu$.
		Again, from Lemma \ref{s4-l1}, the Poincar\'e inequality, and the embedding $W^{1,2}(\omega) \hookrightarrow L^4(\omega)$, we have
		\begin{equation}\label{s4-t2-e10}
			\sum\limits_{\alpha} \|\varphi_\alpha (\boldsymbol{\zeta})\|_4^2 \leq C_2\sum\limits_{\alpha,\beta}\|\rho^{BS}_{\alpha\beta}(\boldsymbol{\zeta})\|_2^2,
		\end{equation}
		where $C_2$ is a positive constant. Now, by combining \eqref{s4-t2-e7} - \eqref{s4-t2-e10}, we obtain
		\begin{align}\label{s4-t2-e11}
			\nonumber
			J_{BS}(\boldsymbol{\eta}) &\geq J_{BS}(\boldsymbol{\eta}_{\boldsymbol{f}}) \\
			& + \bigg(\dfrac{c_e\varepsilon^3}{6} - C_1C_2Ml\bigg) \bigg( \sum\limits_{\alpha,\beta} \| A_{\alpha\beta}(\boldsymbol{\eta}_{\boldsymbol{f}},\boldsymbol{\zeta})\|_2^2 + \sum\limits_{\alpha,\beta}\|\rho^{BS}_{\alpha\beta}(\boldsymbol{\zeta})\|_2^2\bigg).
		\end{align}
		
		Next, letting 
		$$
		l < \dfrac{c_e\varepsilon^3}{6C_1C_2M},
		$$
		and assuming that 
		$$
		J_{BS}(\boldsymbol{\eta}) = J_{BS}(\boldsymbol{\eta}_{\boldsymbol{f}}).
		$$
		From \eqref{s4-t2-e11} we deduce that
		$$
		\| A_{\alpha\beta}(\boldsymbol{\eta}_{\boldsymbol{f}},\boldsymbol{\zeta})\|_2 =  \|\rho^{BS}_{\alpha\beta}(\boldsymbol{\zeta})\|_2 = 0, \textrm{ for all } \alpha,\beta.
		$$
		This implies that
		$$
		\| \gamma_{\alpha\beta}(\boldsymbol{\zeta})\|_2 =  \|\rho^{BS}_{\alpha\beta}(\boldsymbol{\zeta})\|_2 = 0, \textrm{ for all } \alpha,\beta.
		$$
		Then we infer from the result in \cite[p.75]{Destuyn1} showing that the mapping
		$$
		(\zeta_1, \zeta_2, \zeta_3) \in (W^{1,2}_0(\omega))^2 \times W^{2,2}_0 (\omega) \to \sum\limits_{\alpha,\beta} \left(\| \gamma_{\alpha\beta} (\boldsymbol{\zeta}) \|_2 +\|\rho^{BS}_{\alpha\beta} (\boldsymbol{\zeta}) \|_2\right) ,
		$$
		where $\boldsymbol{\zeta} = \zeta^i \boldsymbol{a}_i$, is a norm on the space $(W^{1,2}_0(\omega))^2 \times W^{2,2}_0 (\omega)$ equivalent to the canonical one, we obtain
		$$
		\boldsymbol{\zeta} = \boldsymbol{0}.
		$$
		This completes our proof.
	\end{proof}
	
	\section*{Appendix: Detailed proof of equation \eqref{s4-t2-e6}}
	In the following, we will give the detailed proof of \eqref{s4-t2-e6}. First, recall that the functions $G^{BS}_{\alpha\beta}(\boldsymbol{\eta})$ are given by
	$$G^{BS}_{\alpha\beta}(\boldsymbol{\eta}) = \gamma_{\alpha\beta} (\boldsymbol{\eta}) + \dfrac{1}{2} \varphi_\alpha (\boldsymbol{\eta})\varphi_\beta(\boldsymbol{\eta}), \textrm{ for every } \boldsymbol{\eta}.$$ 
	Then, for every $\boldsymbol{\zeta} \in (W^{1,2}_0(\omega))^2 \times W^{2,2}_0 (\omega)$, we have
	\begin{align}\label{ap1}
		\nonumber
		G^{BS}_{\alpha\beta}(\boldsymbol{\eta_f} + \boldsymbol{\zeta}) & = G^{BS}_{\alpha\beta}(\boldsymbol{\eta_f}) + \gamma_{\alpha\beta}(\boldsymbol{\zeta}) \\
		\nonumber
		& + \dfrac{1}{2} (\varphi_\alpha (\boldsymbol{\zeta})\varphi_\beta(\boldsymbol{\eta_f}) + \varphi_\alpha(\boldsymbol{\eta_f})\varphi_\beta (\boldsymbol{\zeta})) \\
		\nonumber
		& + \dfrac{1}{2}\varphi_\alpha(\boldsymbol{\zeta})\varphi_\beta(\boldsymbol{\zeta}) \\
		\tag{A1}
		& = G^{BS}_{\alpha\beta}(\boldsymbol{\eta_f}) + \textrm{d}G^{BS}_{\alpha\beta}(\boldsymbol{\eta_f})(\boldsymbol{\zeta}) + \dfrac{1}{2}\varphi_\alpha(\boldsymbol{\zeta})\varphi_\beta(\boldsymbol{\zeta}),
	\end{align}
	where 
	$$ \textrm{d}G^{BS}_{\alpha\beta}(\boldsymbol{\eta_f})(\boldsymbol{\zeta}) := \gamma_{\alpha\beta}(\boldsymbol{\zeta}) + \dfrac{1}{2} (\varphi_\alpha (\boldsymbol{\zeta})\varphi_\beta(\boldsymbol{\eta_f}) + \varphi_\alpha(\boldsymbol{\eta_f})\varphi_\beta (\boldsymbol{\zeta})).$$
	Then, it follows from \eqref{ap1} that
	\begin{align}\label{ap2}
		\nonumber
		G^{BS}_{\alpha\beta}(\boldsymbol{\eta_f} + \boldsymbol{\zeta}) G^{BS}_{\sigma\tau}(\boldsymbol{\eta_f}+\boldsymbol{\zeta}) & = G^{BS}_{\alpha\beta}(\boldsymbol{\eta_f})G^{BS}_{\sigma\tau}(\boldsymbol{\eta_f}) \\
		\nonumber
		& + G^{BS}_{\alpha\beta}(\boldsymbol{\eta_f}) \textrm{d}G^{BS}_{\sigma\tau}(\boldsymbol{\eta_f})(\boldsymbol{\zeta}) +  G^{BS}_{\sigma\tau}(\boldsymbol{\eta_f})\textrm{d}G^{BS}_{\alpha\beta}(\boldsymbol{\eta_f}) (\boldsymbol{\zeta}) \\
		\nonumber
		& + \dfrac{1}{2} G^{BS}_{\alpha\beta}(\boldsymbol{\eta_f}) \varphi_{\sigma}(\boldsymbol{\zeta})\varphi_\tau (\boldsymbol{\zeta}) + \dfrac{1}{2} G^{BS}_{\sigma\tau}(\boldsymbol{\eta_f}) \varphi_{\alpha}(\boldsymbol{\zeta})\varphi_\beta (\boldsymbol{\zeta})\\
		\tag{A2}
		&+ A_{\alpha\beta}(\boldsymbol{\eta_f}, \boldsymbol{\zeta})A_{\sigma\tau}(\boldsymbol{\eta_f}, \boldsymbol{\zeta}),
	\end{align}
	where $A_{\alpha\beta}(\boldsymbol{\eta_f}, \boldsymbol{\zeta})$ is given in Step 2 of the proof of Theorem \ref{s4-t2}.
	
	Thanks to the symmetry of the elasticity tensor $(a^{\alpha\beta\sigma\tau})$, we easily deduce that
	\begin{align}\label{ap3}
		\nonumber
		& \dfrac{1}{2}\sum\limits_{\alpha,\beta,\sigma,\tau} a^{\alpha\beta\sigma\tau} \bigg(G^{BS}_{\alpha\beta}(\boldsymbol{\eta_f}) \varphi_{\sigma}(\boldsymbol{\zeta})\varphi_\tau (\boldsymbol{\zeta}) + G^{BS}_{\sigma\tau}(\boldsymbol{\eta_f}) \varphi_{\alpha}(\boldsymbol{\zeta})\varphi_\beta (\boldsymbol{\zeta}) \bigg) \\
		\tag{A3}
		& = \sum\limits_{\alpha,\beta,\sigma,\tau} a^{\alpha\beta\sigma\tau} G^{BS}_{\alpha\beta}(\boldsymbol{\eta_f}) \varphi_{\sigma}(\boldsymbol{\zeta})\varphi_\tau (\boldsymbol{\zeta}).
	\end{align}
	
	Also, notice that
	\begin{align}\label{ap4}
		\nonumber
		\rho^{BS}_{\alpha\beta}(\boldsymbol{\eta_f} + \boldsymbol{\zeta})\rho^{BS}_{\sigma\tau}(\boldsymbol{\eta_f} + \boldsymbol{\zeta})  = & \rho^{BS}_{\alpha\beta}(\boldsymbol{\eta_f})\rho^{BS}_{\sigma\tau}(\boldsymbol{\eta_f}) +  \rho^{BS}_{\alpha\beta}(\boldsymbol{\eta_f})\rho^{BS}_{\sigma\tau}(\boldsymbol{\zeta}) \\
		\tag{A4}
		& + \rho^{BS}_{\alpha\beta}(\boldsymbol{\zeta})\rho^{BS}_{\sigma\tau}(\boldsymbol{\eta_f}) + \rho^{BS}_{\alpha\beta}(\boldsymbol{\zeta})\rho^{BS}_{\sigma\tau}(\boldsymbol{\zeta}),
	\end{align}
	and
	\begin{align}\label{ap5}
		\nonumber
		\textrm{d}J_{BS} (\boldsymbol{\eta_f})(\boldsymbol{\zeta}) = & \dfrac{\varepsilon}{2}\sum\limits_{\alpha,\beta,\sigma,\tau}\int\limits_\omega a^{\alpha\beta\sigma\tau} \bigg[G^{BS}_{\alpha\beta}(\boldsymbol{\eta_f}) \textrm{d}G^{BS}_{\sigma\tau}(\boldsymbol{\eta_f})(\boldsymbol{\zeta}) \\
		\nonumber
		& \quad \quad +  G^{BS}_{\sigma\tau}(\boldsymbol{\eta_f})\textrm{d}G^{BS}_{\alpha\beta}(\boldsymbol{\eta_f}) (\boldsymbol{\zeta}) \bigg]\sqrt{a}dy \\
		\nonumber 
		& + \dfrac{\varepsilon^3}{6}\sum\limits_{\alpha,\beta,\sigma,\tau}\int\limits_\omega a^{\alpha\beta\sigma\tau} \bigg[ \rho^{BS}_{\alpha\beta}(\boldsymbol{\eta_f})\rho^{BS}_{\sigma\tau}(\boldsymbol{\zeta}) \\
		\tag{A5}
		& \quad \quad + \rho^{BS}_{\alpha\beta}(\boldsymbol{\zeta})\rho^{BS}_{\sigma\tau}(\boldsymbol{\eta_f}) \bigg]\sqrt{a}dy.
	\end{align}
	Then, by combining \eqref{ap2}, \eqref{ap3}, \eqref{ap4} and \eqref{ap5} we obtain equation \eqref{s4-t2-e6}.
	
	\section*{Acknowledgments}  The author would like to thank Professor Cristinel Mardare for his encouragement and advice.
	
	\section*{Disclosure statement}
	The author has no conflicts of interest to declare that are relevant to the content of this article.
	
	\bibliographystyle{acm}

\end{document}